\newtheorem{theorem}{Theorem}
\newtheorem{corollary}[theorem]{Corollary}
\newtheorem{lemma}[theorem]{Lemma}
\newtheorem{convention}[theorem]{Convention}
\theoremstyle{definition}
\newtheorem{definition}[theorem]{Definition}
\newtheorem{observation}[theorem]{Observation}
\newtheorem{case}{Case}
\newcommand{\tw}{\operatorname{tw}}
\newcommand{\ord}{\operatorname{ord}}
\title{Minimum Cycle Decomposition: A Constructive Characterization for Graphs of Treewidth Two with Node Degrees Two and Four}
\author{Irene Heinrich and Sven O.~Krumke}
\begin{document}

\maketitle


\begin{abstract}
	Substantial efforts have been made to compute or estimate the minimum number $c(G)$ of cycles needed to partition the edges of an Eulerian graph.
	We give an equivalent characterization of Eulerian graphs of treewidth $2$ and with maximum degree $4$.
	This characterization enables us to present a linear time algorithm for the computation of $c(G)$ for all $G$ in this class.
\end{abstract}

\section{Introduction}

\subsection{Overview and Preliminary Results}
It is a well-known result that a graph is Eulerian if and only if it is connected and it decomposes into cycles.
In this article we focus on finding a minimum cycle decomposition for a graph $G$, i.e., a decomposition into a minimum number $c(G)$ of cycles.

There has been a lot of research on an upper bound for $c(G)$.
Already in 1968, Hajós conjectured that if $G$ is a simple graph, it holds that $c(G) \leq \lfloor \frac{n}{2} \rfloor$, cf.\ \cite{Lovasz68}. Dean showed in \cite{DeanDicycleDecomposition} that the upper bound can be replaced by $ \lfloor \frac{n-1}{2} \rfloor$. Granville and Moisiadis have shown in \cite{GranMoiDeg4} that Hajós conjecture holds true for Eulerian simple graphs of maximum degree four. Seyffarth showed that it is true for every planar simple graph, cf.\ \cite{SeyffPlanar}.
Moreover, Fan and Xu showed in \cite{Fan_Xu02} that Hajós conjecture also holds true for all projective planar graphs and all graphs which do not contain $K_6-$ as a minor.
The general conjectue is still open today.

Our goal is to present the class of double ear decomposable graphs on which the exact computation of $c(G)$ is possible in linear time. This class is defined through a constructive characterization: We start with an arbitrary cycle. Then, in every step, we can either subdivide the graph or add a so-called double ear. Every graph that can by constructed in a finite number of such steps is called a \emph{double ear decomposable graph}.

We further show that a graph is double ear decomposable if and only if it is of treewidth at most two and has only node degrees two or four. Thus, we can give a constructive characterization of these graphs. An overview on constructive characterizations in graph theory can be found in \cite{KovacConstrChar}.

Finally, we present an algorithm that decides in linear time whether a graph is double ear decomposable. In this case, the algorithm also outputs a double ear decomposition. Furthermore, it calculates a minimum cycle decomposition in linear time.

\subsection{Notation and Terminology}
The graphs discussed in this article may contain multiple edges or loops.
Let $G = (V, E)$ be a graph. We set $\ord(G):= |V|$. Let $v \in V$ and $e \in E$.
Then $G-e$ denotes the graph obtained by removing the edge $e$ from $G$ and $G - v$ denotes the graph obtained from removing $v$ and all of its adjacent edges from $G$.
A \emph{cut} of $G$ is a partition $V = V_1 \cup V_2$ of $V$ such that none of the sets $V_1$ and $V_2$ is empty.
An edge $e$ is \emph{contained} in the cut if it connects a node from $V_1$ with a node from $V_2$.
The \emph{weight} of a cut is the number of edges it contains.
We denote with $\Delta(G)$ the maximum node degree in $G$.
An \emph{even graph} is a graph containing only nodes of even degree.
A \emph{decomposition} of a graph is a set of subgraphs such that each edge appears in exactly one subgraph in the set.


\begin{definition}[Cycle Decomposition]
	Let $G$ be an even graph. A decomposition of $G$ into cycles is called a \emph{cycle decomposition}.
	We call
	\[c(G) = \min\{|\mathcal{C}| \mid \mathcal{C} \text{ is a cycle decomposition of } G\}\]
	the \emph{cycle number} of $G$.
\end{definition}

\section{Cycles in Double Ear Decomposable Graphs}

\begin{observation}
	Let $G$ be an even graph. If $G_1, G_2, \dots, G_l$. are the connected components of $G$, then
	\[c(G) = \sum_{i=1}^l c(G_i).\]
\end{observation}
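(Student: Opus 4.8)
The plan is to establish the equality by proving the two inequalities separately, the single crucial structural fact being that a cycle, being a connected subgraph, must lie entirely within one connected component of $G$.

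For the upper bound $c(G) \leq \sum_{i=1}^l c(G_i)$, I would fix a minimum cycle decomposition $\mathcal{C}_i$ of each component $G_i$, so that $|\mathcal{C}_i| = c(G_i)$. Since the components $G_1, \dots, G_l$ are pairwise edge-disjoint and together contain every edge of $G$, the union $\mathcal{C}_1 \cup \dots \cup \mathcal{C}_l$ is a cycle decomposition of $G$ that uses exactly $\sum_{i=1}^l c(G_i)$ cycles. Hence the minimum $c(G)$ cannot exceed this number.

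For the lower bound I would start from an arbitrary minimum cycle decomposition $\mathcal{C}$ of $G$. The key observation is that each cycle $C \in \mathcal{C}$ is connected, so all of its edges lie in a common component; this lets me sort the cycles of $\mathcal{C}$ into classes $\mathcal{C}_1, \dots, \mathcal{C}_l$ according to the component to which they belong. Each $\mathcal{C}_i$ is then a cycle decomposition of $G_i$, since it covers exactly the edges of $G_i$, each one once, so that $|\mathcal{C}_i| \geq c(G_i)$. Summing over $i$ and using that the classes $\mathcal{C}_i$ partition $\mathcal{C}$ yields $c(G) = |\mathcal{C}| = \sum_{i=1}^l |\mathcal{C}_i| \geq \sum_{i=1}^l c(G_i)$.

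There is essentially no hard step here; the only point requiring a moment of care is verifying that each $\mathcal{C}_i$ really is a valid cycle decomposition of $G_i$, i.e.\ that it covers every edge of $G_i$ exactly once. This follows immediately from $\mathcal{C}$ being a decomposition of $G$ together with the fact that the edge sets of the components partition $E$, so I expect the write-up to be short.
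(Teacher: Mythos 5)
Your proof is correct and is exactly the argument the paper implicitly relies on: the paper states this as an unproved observation, and the two-inequality argument hinging on the fact that each cycle is connected and hence lies in a single component is the intended justification. Nothing is missing.
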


Unless stated otherwise, we only consider connected graphs from now on. The even connected graphs are exactly the Eulerian graphs.

\begin{observation}
	\label{obs: 2cut Eulerian}
	Let $G$ be an Eulerian graph.
	If there is a cut in $G$ of weight two containing the edges $e$ and $e'$, every cycle decomposition of $G$ contains a cycle $C$ with $e, e' \in E(C)$.
\end{observation}

\begin{definition}[Subdivision, Resolving a Node of Degree $2$]
	A graph $G'$ that is obtained by replacing an edge with endnodes $u$ and $w$ of a graph $G$ by a new path $uvw$ is called a \emph{simple subdivision} of $G$.
	We say that $G$ is obtained from $G'$ by \emph{resolving} the node $v$.
	Moreover, $G''$ is called a \emph{subdivision} of $G$ if there is a finite sequence $(G_0 = G, G_1, G_2, \dots, G_k = G'')$ such that $G_i$ is a simple subdivision of $G_{i-1}$ for each $i = 1, \dots, k$.
\end{definition}

\begin{figure}[htbp]
	\caption{Subdividing the triple edge into a house by simple subdivision steps: The green lines are exchanged by length-two-paths in the next step.}
	\centering
	\begin{subfigure}[t]{0.2\textwidth}
		\centering
		\begin{tikzpicture}
		\clip(-0.1,-0.9) rectangle (1.1,0.8);
		\node (a) at (0,0) {$a$};
		\node (b) at (1,0) {$b$};
		\draw[-] (a) edge (b);
		\draw[-] (a) edge [bend left = 20, green] (b);
		\draw[-] (a) edge [bend right = 20] (b);
		\end{tikzpicture}
	\end{subfigure}
	\begin{subfigure}[t]{0.2\textwidth}
		\centering
		\begin{tikzpicture}
		\clip(-0.1,-0.9) rectangle (1.1,0.8);
		\node (a) at (0,0) {$a$};
		\node (b) at (1,0) {$b$};
		\node (c) at (0.5,0.7) {$c$};
		\draw[-] (a) edge (b);
		\draw[-] (a) edge [bend right = 20, green] (b);
		\draw[-] (a) edge (c);
		\draw[-] (b) edge (c);
		\end{tikzpicture}
	\end{subfigure}
	\begin{subfigure}[t]{0.2\textwidth}
		\centering
		\begin{tikzpicture}
		\clip(-0.1,-0.9) rectangle (1.1,0.8);
		\node (a) at (0,0) {$a$};
		\node (b) at (1,0) {$b$};
		\node (c) at (0.5,0.7) {$c$};
		\node (d) at (0,-0.7) {$d$};
		\draw[-] (a) edge (b);
		\draw[-] (a) edge (c);
		\draw[-] (b) edge (c);
		\draw[-] (a) edge (d);
		\draw[-] (b) edge [green] (d);
		\end{tikzpicture}
	\end{subfigure}
	\begin{subfigure}[t]{0.2\textwidth}
		\centering
		\begin{tikzpicture}
		\clip(-0.1,-0.9) rectangle (1.1,0.8);
		\node (a) at (0,0) {$a$};
		\node (b) at (1,0) {$b$};
		\node (c) at (0.5,0.7) {$c$};
		\node (d) at (0,-0.7) {$d$};
		\node (e) at (1,-0.7) {$e$};
		\draw[-] (a) edge (b);
		\draw[-] (a) edge (c);
		\draw[-] (b) edge (c);
		\draw[-] (a) edge (d);
		\draw[-] (b) edge (e);
		\draw[-] (d) edge (e);
		\end{tikzpicture}
	\end{subfigure}
\end{figure}
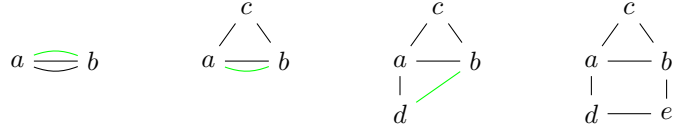

\begin{observation}
	\label{obs: subdivision same cycles}
	If $G'$ is a subdivision of $G$, then $c(G) = c(G')$.
\end{observation}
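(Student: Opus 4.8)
The plan is to reduce to a single simple subdivision and then exhibit a cardinality-preserving bijection between the cycle decompositions of the two graphs. Since a subdivision is by definition obtained through a finite sequence $(G_0 = G, G_1, \dots, G_k = G'')$ in which each $G_i$ is a simple subdivision of $G_{i-1}$, it suffices to prove the claim for a single simple subdivision: the general case then follows by induction on $k$, the base case $k = 0$ being trivial and the inductive step chaining the equalities $c(G_{i-1}) = c(G_i)$.

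So I would assume that $G'$ arises from $G$ by replacing an edge $e = uw$ with a path $u v w$ through a new node $v$. The key structural observation is that $v$ has degree two in $G'$, so the two new edges $f = uv$ and $f' = vw$ form a cut of weight two, namely the cut with $V_1 = \{v\}$ and $V_2 = V(G') \setminus \{v\}$. Applying Observation~\ref{obs: 2cut Eulerian} to $G'$, every cycle decomposition of $G'$ must contain a single cycle carrying both $f$ and $f'$; in particular the subpath $u v w$ always appears as a consecutive piece of one cycle. This is exactly what I need to move decompositions back and forth.

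With this in hand I would define two maps. From a cycle decomposition $\mathcal{C}$ of $G$, let $C \in \mathcal{C}$ be the unique cycle containing $e$ and replace the edge $e$ by the path $uvw$, leaving all other cycles unchanged; this yields a cycle decomposition $\Phi(\mathcal{C})$ of $G'$ with $|\Phi(\mathcal{C})| = |\mathcal{C}|$. Conversely, from a cycle decomposition $\mathcal{C}'$ of $G'$, take the unique cycle $C'$ through $f$ and $f'$ (well defined by Observation~\ref{obs: 2cut Eulerian}) and replace its subpath $uvw$ by the single edge $e$, giving a decomposition $\Psi(\mathcal{C}')$ of $G$ with $|\Psi(\mathcal{C}')| = |\mathcal{C}'|$. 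One checks directly that $\Psi \circ \Phi$ and $\Phi \circ \Psi$ are the identity, so $\Phi$ is a cardinality-preserving bijection; taking minima over both sides then gives $c(G) = c(G')$.

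The only point requiring care is verifying that these replacement operations really produce cycle decompositions: each edge of the new graph must be covered exactly once, and the modified closed walks must remain genuine cycles with no repeated vertex. Since the operation only swaps a single edge of a cycle for an internally disjoint length-two path through a brand-new vertex $v$, no repeated vertex can be introduced and edge-coverage is clearly preserved, so this verification is routine; the degenerate case in which $e$ is a loop, where the subdivided object is a digon on $\{u,v\}$, is handled identically. The essential input throughout is Observation~\ref{obs: 2cut Eulerian}, which forces $f$ and $f'$ onto a common cycle and thereby makes $\Psi$ well defined.
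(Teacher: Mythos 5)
Your argument is correct. The paper states this as an observation without any proof, so there is nothing to diverge from; your reduction to a single simple subdivision followed by the cardinality-preserving bijection $\Phi$, $\Psi$ (with the degree-two vertex $v$ forcing $f$ and $f'$ onto a common cycle, via Observation~\ref{obs: 2cut Eulerian} or simply because $v$ has only those two incident edges) is exactly the standard justification the authors are implicitly relying on, including the loop/digon degenerate case.
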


\begin{observation}
	\label{obs: articulation cycles}
	Let $e_1, \dots, e_l$ be the different loops of a graph $G$. Then \[c(G) = l + c(G - \{e_1, \dots, e_l\}).\]
\end{observation}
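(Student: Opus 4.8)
The plan is to prove the claimed equality by establishing the two inequalities separately, the whole argument resting on one structural fact: in any cycle decomposition of $G$, each loop $e_i$ must form a cycle all by itself. A loop is an edge from a node $v$ to itself, and under the usual convention a cycle is a closed walk whose internal nodes are pairwise distinct. Any cycle that used the loop $e_i$ together with at least one further edge would have to visit $v$ more than once, which is impossible; hence the only cycle containing $e_i$ is the length-one cycle consisting of $e_i$ alone. This observation is where the real content sits, and I expect it to be the main (though short) obstacle, since it hinges on nailing down precisely what counts as a cycle when loops are permitted.

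Before comparing cycle numbers I would first check that $c$ is even defined on both sides, i.e.\ that $G' := G - \{e_1, \dots, e_l\}$ is again an even graph. Deleting a loop at $v$ lowers $\deg(v)$ by exactly two, so removing all $l$ loops preserves the parity of every degree and $G'$ is even. (Deleting loops also cannot disconnect the graph, since a loop joins no two distinct nodes, so no issue with the componentwise reduction arises.)

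For the inequality $c(G) \le l + c(G')$, I would take an optimal cycle decomposition $\mathcal{C}'$ of $G'$ with $|\mathcal{C}'| = c(G')$ and adjoin the $l$ loops as $l$ additional singleton cycles. This yields a cycle decomposition of $G$ using $l + c(G')$ cycles, so $c(G)$ cannot exceed this number. For the reverse inequality $c(G) \ge l + c(G')$, I would start from an optimal cycle decomposition $\mathcal{C}$ of $G$ with $|\mathcal{C}| = c(G)$. By the structural fact above, $\mathcal{C}$ contains the $l$ distinct singleton cycles $\{e_1\}, \dots, \{e_l\}$, and deleting them leaves a collection of cycles that uses exactly the edges of $G'$, hence a cycle decomposition of $G'$ with $|\mathcal{C}| - l$ cycles. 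Therefore $c(G') \le c(G) - l$.

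Combining the two inequalities gives $c(G) = l + c(G')$, which is the assertion. The argument is essentially a clean bijection between cycle decompositions of $G$ and pairs consisting of a cycle decomposition of $G'$ together with the forced family of loop-cycles; once the forcing of loops into singleton cycles is justified, the counting is immediate.
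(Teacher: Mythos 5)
Your proof is correct; the paper states this as an Observation with no proof at all, treating it as evident. Your argument---loops can only occur as singleton cycles in any decomposition (by 2-regularity of cycles), which yields both inequalities---is precisely the implicit reasoning the authors rely on, so there is nothing to compare beyond noting that you have filled in the details they omitted.
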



\subsection{Double Ear Decompositions}
\begin{definition}[Adding a Double Ear]
	\label{def: Adding a Double Ear}
	Let $G$ be a graph containing a path $P = v_1 \cdots v_r$ such that $\forall i = 1, \dots, r: \deg_G(v_i) = 2$.
	We \emph{add a double ear to $P$} by the following construction:
	\begin{enumerate}
		\item \label{step: double ear connect} Connect the first and the last vertex of $P$ by a new edge.
		\item \label{step: double ear duplicate} Duplicate each edge of $P$.
	\end{enumerate}
	Also the path $v_1$ without edges is allowed as a choice of $P$. In this case, a loop is added to $v_1$.
	Observe that all edges added in the above two steps build a cycle. In particular: If $G$ is an Eulerian graph, also the graph obtained by adding a double ear is an Eulerian graph.
\end{definition}

\begin{definition}
	Let $(G_0, G_1, \dots, G_k)$ be a finite sequence of graphs such that for each $i = 1, \dots, k$: either
	\begin{itemize}
		\item \emph{double ear step:} $G_i$ is either built by adding a double ear to $G_{i-1}$ or
		\item \emph{subdivision step:} $G_i$ is a simple subdivision of $G_{i-1}$.
	\end{itemize}
	Then, we call $(G_0, G_1, \dots, G_k)$ a double ear decomposition of $G_k$ starting with $G_0$.
	If $G_0$ is a cycle, we call $G_k$ a \emph{double ear decomposable graph}.
\end{definition}

\subsection{Double Ears Correspond to a Minimum Cycle Decomposition}
\begin{theorem}
	\label{theorem: ded cycle number}
	Let $(G_0, G_1, \dots, G_{k'} = G)$ be a double ear decomposition of a graph $G$ with $k \leq k'$ double ear steps.
	Then
	\[c(G) = c(G_0) + k.\]
\end{theorem}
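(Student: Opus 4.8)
The plan is to reduce the theorem to a single-step statement and then induct on the length $k'$ of the decomposition. Concretely, I would isolate the \emph{step lemma}: if $G_i$ is obtained from $G_{i-1}$ by adding a double ear, then $c(G_i) = c(G_{i-1}) + 1$. Granting this, the theorem follows by induction on $k'$: a subdivision step leaves the cycle number unchanged by Observation~\ref{obs: subdivision same cycles} and does not alter $k$, whereas a double ear step raises both $c$ and $k$ by exactly one via the step lemma. The base case $k' = 0$ is immediate. Thus all the real work lies in the step lemma, and within it the two inequalities behave very differently.

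The upper bound $c(G_i) \le c(G_{i-1}) + 1$ is the easy direction. By construction the edges added in a double ear step---the new chord $v_1 v_r$ together with the duplicates of the edges of $P$---form a single cycle $D$ (a loop when $P$ is a single vertex). Since a double ear step creates no new vertices, taking a minimum cycle decomposition of $G_{i-1}$ and adjoining $D$ yields a cycle decomposition of $G_i$ with exactly one extra cycle.

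The lower bound $c(G_i) \ge c(G_{i-1}) + 1$ is the crux. I would first record that, because every interior vertex of $P$ has degree two in $G_{i-1}$, the vertex set $A = V(P)$ is separated from the rest of the graph by a cut of weight two, consisting of the two non-path edges $g_1$ at $v_1$ and $g_r$ at $v_r$; this cut survives into $G_i$ since all added edges lie inside $A$. Fix a minimum cycle decomposition $\mathcal{C}$ of $G_i$. By Observation~\ref{obs: 2cut Eulerian} exactly one cycle $C^*$ uses both $g_1$ and $g_r$, and---crucially using that the cycles are \emph{simple}---its trace inside $A$ is a single simple path $Q$ from $v_1$ to $v_r$. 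Every other cycle of $\mathcal{C}$ that meets $A$ must lie entirely inside $A$, since $C^*$ already consumes the only two cut edges. I would then convert $\mathcal{C}$ into a cycle decomposition of $G_{i-1}$ by rerouting $C^*$ along the original path $e_1 \cdots e_{r-1}$ (legitimate, because in $G_{i-1}$ the interior of $A$ is precisely that path), retaining the cycles that avoid $A$, and discarding the cycles living inside $A$. This gives $c(G_{i-1}) \le c(G_i) - m$, where $m$ is the number of discarded interior cycles, so it remains to show $m \ge 1$.

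The heart of the matter is then a counting step: the internal edges of $A$ in $G_i$---the doubled path plus the chord---number $2r - 1$, whereas a simple path $Q$ on the $r$ vertices of $A$ uses at most $r - 1$ edges. Hence at least $r$ internal edges remain uncovered by $C^*$ and must be carried by interior cycles, forcing $m \ge 1$; combining the two bounds yields the step lemma. I expect the main obstacles to be (i) establishing the weight-two cut and the simple-path structure of $Q$ cleanly, as this is exactly where the restriction to simple cycles is indispensable---were closed trails allowed, a single $C^*$ could absorb the whole doubled structure along an Euler trail and the argument would collapse---and (ii) the degenerate configurations, namely when $P$ is a single vertex (handled instead by the loop rule, Observation~\ref{obs: articulation cycles}) and when $A = V$, so that $G_{i-1}$ is itself a cycle and no genuine cut exists, which I would dispatch by a short direct computation showing $c$ rises from $1$ to $2$.
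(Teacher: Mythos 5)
Your proof is correct, and its skeleton matches the paper's: an induction that isolates the last double-ear step (subdivision steps being free by Observation~\ref{obs: subdivision same cycles}), the length-zero ear handled by Observation~\ref{obs: articulation cycles}, and the weight-two cut formed by the two non-path edges at the ends of $P$, exploited via Observation~\ref{obs: 2cut Eulerian}, as the structural pivot. Where you genuinely diverge is in how the key identity $c(G_i)=c(G_{i-1})+1$ is established. The paper deletes the two cut edges, takes the two resulting components $G^A$ and $G^B$, and argues through a correspondence between minimum cycle decompositions of $G_i$ and pairs of optimal solutions of two ``cycles plus one connecting path'' subproblems, evaluating the ear side by the one-line remark that $G^B$ ``is not just a path''. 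You instead prove two inequalities: the upper bound by adjoining the ear cycle to a minimum decomposition of $G_{i-1}$, and the lower bound by surgery on a minimum decomposition of $G_i$ --- rerouting the unique cut-crossing cycle $C^*$ through the original path and discarding all cycles confined to $A=V(P)$ --- with the count of $2r-1$ internal edges against at most $r-1$ edges on the simple trace $Q$ forcing at least one discarded cycle, hence $c(G_{i-1})\le c(G_i)-1$. This buys two things. First, the paper's claim that at least one cycle is needed on the ear side gets an actual quantitative proof rather than an appeal to the shape of $G^B$. Second, and more importantly, you flag and settle the degenerate case $A=V$, where $G_{i-1}$ is a cycle spanned by $P$: there the two non-path edges coincide ($e_0=e_{r+1}$ in the paper's notation), $G_i-e_0-e_{r+1}$ has only one component, and the paper's phrase ``the two connected components'' is simply false, so its Case~2 silently skips this configuration; your direct computation ($c$ rising from $1$ to $2$ for the doubled cycle) closes exactly this hole. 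So your route is sound and, on that point, strictly more careful than the published argument.
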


\begin{proof}
	We prove the above statement by induction on $k$.
	If $k = 0$, then $G$ is a subdivision of $G_0$ and thus $c(G) = c(G_0) + 0 = c(G_0) + k$ by Observation \ref{obs: subdivision same cycles}.
	Otherwise let $k \geq 1$.
	Let $1 \leq i \leq k'$ such that $G_i$ is the last double ear step of the decomposition $(G_0, G_1, \dots, G_{k'})$.
	Then, $G$ is a subdivision of $G_i$ and therefore $c(G) = c(G_i)$. By induction $c(G_{i-1}) = c(G_0) + k-1 $ It remains to show that $c(G_i) = c(G_{i-1}) + 1$.
	
	There is a degree-two-path $P = v_0 v_1 \cdots v_r$ in $G_{i-1}$ such that $G_{i}$ is $G_{i-1}$ plus a double ear added to $P$.
	\begin{case} [$v_0 = v_r$] Then, $P$ is a path of length zero in $G_{i-1}$. 
		This means, $G_i$ is $G_{i-1}$ with one loop added at $v_0$. 
		We obtain $c(G_{i}) = c(G_{i-1}) + 1$ by Observation \ref{obs: articulation cycles}.
	\end{case}
	\begin{case}[$v_0 \neq v_r$]
	There is exactly one edge $e_0$ in $G_{i-1}$ outside of $P$ which is incident to $v_0$. Analogously let $e_{r+1}$ be the unique edge in $G_{i-1}$ outside $P$ and incident to $v_r$.
	Let $G^A$ and $G^B$ be the two connected components of $G_i- e_0 - e_{r+1}$. We may assume that $P \subseteq G^B$.
	The problem of finding a minimum cycle decomposition of $G_i$ directly corresponds to solving both of the following two problems:
	Let $a(e_0)$ and $a(e_{r+1})$ be the endpoints of $e_0$ and $e_{r+1}$ in $G^A$. Let $b(e_0)$ and $b(e_{r+1})$ be the endpoints of $e_0$ and $e_{r+1}$ in $G^B$.
	\begin{enumerate}[(A)]
		\item \label{prob A} Decompose $G^A$ into cycles and one path with endnodes $a(e_0)$ and $a(e_{r+1})$ such that the number of cycles in the decomposition is minimized.
		\item \label{prob B} Decompose $G^B$ into cycles and one path with endnodes $b(e_0)$ and $b(e_{r+1})$ such that the number of cycles in the decomposition is minimized.
	\end{enumerate}
	By Observation \ref{obs: 2cut Eulerian}, a minimum cycle decomposition of $G_i$ clearly induces an optimal solution for \ref{prob A} and \ref{prob B}.
	Moreover, if $(C_1^{\ref{prob A}}, C_2^{\ref{prob A}}, \dots, C_{k_{\ref{prob A}}}^{\ref{prob A}}, P^{\ref{prob A}})$ is an optimal solution for \ref{prob A} and
	$(C_1^{\ref{prob B}}, C_2^{\ref{prob B}}, \dots, C_{k_{\ref{prob B}}}^{\ref{prob B}}, P^{\ref{prob B}})$ is an optimal solution for \ref{prob B},
	where $P^{\ref{prob B}}$ and $P^{\ref{prob B}}$ are the respective paths, then
	\[(C_1^{\ref{prob A}}, C_2^{\ref{prob A}}, \dots, C_{k_{\ref{prob A}}}^{\ref{prob A}}, C_1^{\ref{prob B}}, C_2^{\ref{prob B}}, \dots, C_{k_{\ref{prob B}}}^{\ref{prob B}},  P^{\ref{prob A}} + P^{\ref{prob B}} + e_0 + e_{r+1})\] is a minimum cycle decomposition of $G_i$ by Observation \ref{obs: 2cut Eulerian}.
	We can now solve the two problems separately to obtain a minimum cycle decomposition for $G_i$:
	\begin{enumerate}[(A)]
		\item Any minimum cycle decomposition $\mathcal{C} = \{C_1, C_2 \dots, C_{k-1}\}$ for $G_{i-1}$ induces an optimal solution $\{C_1, C_2, \dots, C_{k-2}, P^{\ref{prob A}}\}$ for Problem $\ref{prob A}$.
		\item $\{P, C\}$ is an optimal solution for Problem \ref{prob B}, where $C$ is the cycle consisting of all edges in $E(G_i)\setminus E(G_{i-1})$, i.\ e.\ $C$ is the double ear added to $G_{i-1}$.
		The solution is optimal since $G^B$ is not just a path. So at least one cycle is needed for the desired decomposition.
	\end{enumerate}
	A minimum cycle decomposition for $G_i$ now is
	\begin{align*}
		\{C_1, C_2, \dots, C_{k-2}, C,  P^{\ref{prob A}} + P + e_0 + e_{r+1}\} = \{C_1, C_2, \dots, C_{k-2}, C_{k-1}, C\}.
	\end{align*}
	Altogether $c(G) = c(G_i) = c(G_{i-1}) + 1 = k$.\qedhere
	\end{case}
	\setcounter{case}{0}
\end{proof}

As an immediate consequence we obtain the following corollary:
\begin{corollary}
	Let $G$ be a double ear decomposable graph with a double ear decomposition $(G_0, G_1, \dots, G_{k'} = G)$ containing $k \leq k'$ double ear steps.
	Then $c(G) = k+1$.
\end{corollary}

\section{A Constructive Characterization of Eulerian Graphs of Treewidth Less or Equal to Two and with Degrees Less or Equal to Four}
The main goal of this section is to prove the following constructive characterization.

\begin{theorem}
	\label{theorem: EquivalentDoubleEarDecomposition}
	A graph $G$ is double ear decomposable if and only if $\tw(G) \in \{1,2\}$ and its node degrees lie in $\{2,4\}$.
\end{theorem}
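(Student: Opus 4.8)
The statement is an equivalence, so I would prove the two implications separately. For the forward direction I would argue by induction on the length of a double ear decomposition $(G_0,\dots,G_k)$ of $G$, maintaining two invariants: every node degree lies in $\{2,4\}$, and $\tw(G_i)\le 2$. Both hold for the initial cycle $G_0$. A subdivision step introduces a new degree-two node and leaves every other degree and the treewidth unchanged. For a double ear step on a path $P=v_1\cdots v_r$ I would check that every node of $P$ (which has degree two in $G_{i-1}$) receives degree exactly four: an interior node gets both of its path edges doubled, while $v_1$ and $v_r$ get their single path edge doubled plus the new connecting edge, and in the loop case $v_1$ gains a loop; all other degrees are untouched. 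This settles the degree invariant.

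For the treewidth invariant I would use that the edges added by a double ear form a cycle through $v_1,\dots,v_r$ closed by the connecting edge $v_1v_r$. Writing $G''$ for the graph obtained from $G_{i-1}$ by resolving the internal nodes of $P$, i.e.\ replacing $P$ by a single edge $v_1v_r$ (a minor, so $\tw(G'')\le 2$), the graph $G_i$ arises from $G''$ by replacing that edge with the two-terminal series-parallel gadget $H$ consisting of the doubled path $v_1\cdots v_r$ together with the edge $v_1v_r$. Since $v_1v_r$ is an edge both in a width-two tree decomposition of $G''$ and in one of the cycle $H$, each decomposition has a bag containing $\{v_1,v_r\}$; joining the two trees at these bags yields a width-two tree decomposition of $G_i$. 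Hence $\tw(G_i)\le 2$ (the degenerate loop case does not affect treewidth), completing the forward direction.

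For the converse I would use strong induction on $|E(G)|$. If $G$ is a single cycle it is double ear decomposable by definition. Otherwise, if $G$ has a degree-two node $v$, I would resolve it (the reverse of a subdivision): the smaller graph $G'$ still has treewidth at most two and degrees in $\{2,4\}$, is double ear decomposable by induction, and $G$ is a subdivision of $G'$, so appending a subdivision step finishes this case. Thus I may assume $G$ is $4$-regular. After stripping loops (each loop is a removable one-node double ear, by Observation \ref{obs: articulation cycles}) and disposing of the small configurations that are directly double ear decomposable (a single node with loops, a pair of nodes joined by a quadruple edge, a fully doubled cycle), every node has simple-degree at least two; since graphs of treewidth at most two are $2$-degenerate, some node has simple-degree exactly two, and being $4$-regular it must be joined to its two neighbours by double edges.

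The heart of the proof, and the step I expect to be hardest, is to locate a \emph{removable double ear} in such a $4$-regular, loopless graph of treewidth at most two. Here I would exploit that treewidth at most two means series-parallel, equivalently no $K_4$ minor, so a rooted width-two tree decomposition exhibits an innermost $2$-separator $\{x,y\}$ whose interior part $H$ attaches to the rest of $G$ only at $x$ and $y$. All interior nodes of $H$ have degree four and double edges to their neighbours, and choosing $H$ inclusion-minimal should force it to be a doubled $x$--$y$ path closed by a single edge $xy$, attached to $G-H$ by exactly one edge at each of $x,y$ --- precisely a double ear in the sense of Definition \ref{def: Adding a Double Ear}. Removing it yields a smaller graph satisfying the hypotheses, to which induction applies; re-adding the double ear recovers $G$. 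The delicate point is the multigraph case analysis that pins the innermost piece down to this exact shape, ruling out stray parallel strands or extra direct edges; the treewidth-two hypothesis is indispensable here, and this is where I would concentrate the effort.
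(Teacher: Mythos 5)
Your forward direction is sound and matches the paper's Part~I (induction over the decomposition, degree bookkeeping for the ear, and a series--parallel/tree-decomposition-gluing argument for the treewidth). The gap is entirely in the converse, and it is a real one, concentrated exactly where you say you would ``concentrate the effort'' but have not yet done so. Two of the concrete claims you make on the way are false as stated. First, after reducing to the $4$-regular loopless case, a node of simple-degree two need \emph{not} be joined to its two neighbours by double edges: a triple edge plus a single edge is possible (e.g.\ start from the $6$-cycle $uwabcdu$, add a double ear to the edge $uw$ and another to the path $abcd$; the resulting $4$-regular graph has $u$ joined to $w$ by three parallel edges and to $d$ by one). Your case analysis never meets triple edges, which the paper must treat separately (its Case~\ref{case: udeg4 1v} and Reduction~III). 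Second, the inclusion-\emph{minimal} interior part of a $2$-separator is the wrong object: if a double ear was added to a path of length at least three, say $v_1v_2v_3v_4$, then $\{v_2,v_4\}$ separates $v_3$ from everything else, and the minimal interior part is the single vertex $v_3$ with double edges to the non-adjacent vertices $v_2$ and $v_4$ and \emph{no} closing edge $v_2v_4$ --- not a removable double ear. The correct unit to peel off is an inclusion-\emph{maximal} chain of doubled edges, and proving that such a maximal chain is closed by a single edge with exactly one pendant edge at each end is precisely the structural content you defer. Since that deferred analysis is the entire substance of the hard direction, the proposal does not yet constitute a proof.

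For contrast, the paper avoids hunting for a whole removable double ear. It takes a smooth width-two tree decomposition, uses Lemma~\ref{lemma: LeavesInSmoothTreeDecompositions} to extract a vertex $u$ lying in a single leaf bag (hence with at most two neighbours), deletes $u$ (sometimes together with its neighbours) while patching in one or two edges to preserve the degree condition, applies induction to the smaller graph $G'$, and then \emph{edits the inductively obtained decomposition of $G'$} to produce one for $G$. The delicate case --- $u$ joined by double edges to two non-adjacent neighbours --- is handled by Lemma~\ref{lemma: timeI}, which classifies how a double edge can arise along a double ear decomposition and tells you where in the history of $G'$ to splice $u$ back in. Your peeling strategy is closer in spirit to the paper's algorithmic Reduction~II (which indeed grows a \emph{maximal} doubled path), and could probably be completed, but as written the key lemma is both unproven and misstated.
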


\begin{definition}[Tree Decomposition, Treewidth, Bags, \cite{BodArboretum}]
	A \emph{tree decomposition} of a graph $G = (V, E)$ is a pair $(\{X_i\colon i \in I\}, T=(I,F))$ with $\{X_i\colon i \in I\}$ a family of subsets of $V$, one for each node of $T$, and $T$ a tree such that
	\begin{itemize}
		\item $\bigcup_{i \in I} X_i = V$.
		\item For all edges $vw \in E$, there exists an $i \in I$ with $v \in X_i$ and $w \in X_i$.
		\item For all $i,j,k \in I$: If $j$ is on the path from $i$ to $k$ in $T$, then $X_i \cap X_k \subseteq X_j$.
	\end{itemize}
	For $i \in I$, $X_i$ is called a \emph{bag} of the tree decomposition.
	The \emph{width} of a tree decomposition $(\{X_i\colon i \in I\}, T=(I,F))$ is $\max_{i \in I} |X_i|-1$.
	The \emph{treewidth} of a graph $G$ is the minimum width over all possible tree decompositions of $G$.
\end{definition}

\begin{lemma}[\cite{BodArboretum}]
	\label{lemma: SmoothTreeDecomposition}
	Suppose the treewidth of $G$ is $k$. $G$ has a tree decomposition
	$\left(\left\{ X_i, i \in I \right\}, T = (I, F)\right)$ of width $k$ such that
	\begin{itemize}
		\item For all $i \in I\colon |X_i| = k+1$.
		\item For all $(i,j) \in F\colon |X_i \cap X_j| = k$.
	\end{itemize}
\end{lemma}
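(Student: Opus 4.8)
The plan is to transform an arbitrary optimal tree decomposition into one satisfying both conditions by a sequence of local modifications, verifying at each step that the three tree-decomposition axioms survive. Since $\tw(G) = k$, fix a tree decomposition $(\{X_i : i\in I\}, T=(I,F))$ of width $k$; in particular every bag satisfies $|X_i|\le k+1$, and at least one bag has size exactly $k+1$ (otherwise the width would be smaller).

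First I would enlarge every bag to size exactly $k+1$. Root $T$ at a node $r$ with $|X_r| = k+1$ and process the remaining nodes in top-down order. When a node $i$ with parent $p$ is reached, its parent bag already has size $k+1$; if $|X_i| < k+1$, repeatedly move a vertex of $X_p\setminus X_i$ into $X_i$ until $|X_i| = k+1$. Such vertices exist because $|X_p\setminus X_i| = (k+1) - |X_i\cap X_p| \ge (k+1)-|X_i| \ge 1$. Adding a vertex $v\in X_p$ to the adjacent bag $X_i$ keeps the set of bags containing $v$ connected, since it merely extends the existing subtree by the neighbor $i$ of $p$; and it affects neither the vertex- nor the edge-covering axiom, as we only add vertices to bags. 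After this pass all bags have size exactly $k+1$.

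Next I would adjust the adjacent intersections. If two adjacent bags are equal, contract that edge by identifying the two nodes, which removes a redundancy without violating any axiom; after exhausting such contractions every edge $(i,j)$ has $X_i \ne X_j$, hence $|X_i\cap X_j|\le k$. For an edge with $|X_i\cap X_j| = k - t$ and $t\ge 1$, write $X_i\setminus X_j = \{a_1,\dots,a_p\}$ and $X_j\setminus X_i = \{b_1,\dots,b_p\}$, where $p = t+1 \ge 2$ and the two sets have equal size because both bags have size $k+1$. Replace the edge $(i,j)$ by a path of new bags
\[ Y_\ell = (X_i\cap X_j)\cup\{b_1,\dots,b_\ell\}\cup\{a_{\ell+1},\dots,a_p\},\qquad \ell = 0,1,\dots,p, \]
so that $Y_0 = X_i$, $Y_p = X_j$, each $|Y_\ell| = k+1$, and consecutive bags satisfy $|Y_{\ell-1}\cap Y_\ell| = k$. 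Performing this subdivision on every edge of intersection smaller than $k$ yields a decomposition meeting both requirements.

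The step requiring the most care is verifying that the subdivision preserves the connectivity axiom. For $v\in X_i\cap X_j$ this is immediate, since $v$ lies in every $Y_\ell$. For $v = a_\ell \in X_i\setminus X_j$ the key observation is that $v$ cannot occur in any bag on the $X_j$-side of the original edge: otherwise $v$ would lie in a bag on each side of $(i,j)$ and hence, by connectivity, in $X_j$ itself, contradicting $v\notin X_j$. Thus $a_\ell$ occupies exactly the contiguous prefix $Y_0,\dots,Y_{\ell-1}$ of the inserted path together with the $X_i$-side, which is connected; symmetrically each $b_\ell$ occupies a contiguous suffix together with the $X_j$-side. The edge- and vertex-covering axioms are untouched because $Y_0 = X_i$ and $Y_p = X_j$ are retained and the new bags introduce no vertices outside $X_i\cup X_j$. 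This establishes the lemma.
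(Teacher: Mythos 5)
You cannot be checked against the paper's own argument here, because the paper gives none: this lemma is quoted as a known result from \cite{BodArboretum} and used as a black box. Your proof is correct and self-contained, and it is essentially the standard argument for the existence of smooth tree decompositions. The three phases are all sound: (1) the top-down padding works because $|X_p\setminus X_i| = (k+1)-|X_i\cap X_p| \ge (k+1)-|X_i| \ge 1$ whenever $|X_i|<k+1$, and adding a vertex of the parent bag to a child bag only extends that vertex's subtree by one adjacent node, so coherence is preserved; (2) contracting edges with equal bags is harmless and guarantees $|X_i\cap X_j|\le k$ afterwards; (3) the interpolation bags $Y_\ell$ have the right sizes and pairwise intersections, and you correctly isolate the one point where the construction could fail, namely coherence for a vertex $a_\ell\in X_i\setminus X_j$, resolving it by the observation that such a vertex cannot occur on the $j$-side of the edge $(i,j)$ (else coherence of the original decomposition would force $a_\ell\in X_j$), so its occurrence set after subdivision is the old subtree plus a contiguous prefix of the new path, hence connected. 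What your argument buys is that the paper's characterization no longer rests on an external reference for this structural step; what the citation buys the authors is brevity, since the lemma is classical and the proof, as you see, is a routine but somewhat lengthy verification of the three tree-decomposition axioms under local surgery.
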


\begin{definition}
	A tree decomposition as in Lemma \ref{lemma: SmoothTreeDecomposition} is called a \emph{smooth} tree decomposition.
\end{definition}

\begin{lemma}
	\label{lemma: LeavesInSmoothTreeDecompositions}
	Let $\left(\left\{ X_i, i \in I \right\}, T = (I, F)\right)$ be a smooth tree decomposition of a graph $G$.
	Let $i$ be a leave of $T$. Then, there exists a vertex $v \in X_i$ whose neighbourhood is a subset of $X_i$.
\end{lemma}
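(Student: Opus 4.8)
The plan is to exploit the two defining smoothness properties together with the standard fact that, for any fixed vertex, the bags containing it induce a connected subtree of $T$. Since $i$ is a leaf, it has a unique neighbour $j$ in $T$. Smoothness gives $|X_i| = k+1$ and $|X_i \cap X_j| = k$, so there is exactly one vertex $v \in X_i \setminus X_j$. I claim this $v$ is the desired vertex.

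First I would record the connected-subtree property: for every $u \in V$, the set $I_u := \{\, l \in I : u \in X_l \,\}$ induces a connected subtree of $T$. This is a direct reformulation of the third axiom of a tree decomposition, since if $u \in X_a \cap X_c$ and $b$ lies on the $a$-$c$ path in $T$, then $u \in X_b$, so no node on a path between two bags containing $u$ can omit $u$.

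Next I would pin down where $v$ can appear. We have $i \in I_v$ but $v \notin X_j$, and every node of $T$ other than $i$ is reachable from $i$ only through $j$. Hence the connected subtree $I_v$ cannot leave $i$: if it contained some node $l \neq i$, the $i$-$l$ path in $T$ would pass through $j$, forcing $v \in X_j$ by the connectedness just established, a contradiction. Therefore $I_v = \{i\}$, i.e.\ $X_i$ is the only bag containing $v$. Finally I would translate this into a statement about neighbours: let $w$ be any neighbour of $v$ in $G$; by the edge axiom there is a bag containing both $v$ and $w$, but the only bag containing $v$ is $X_i$, so $w \in X_i$. Thus the entire neighbourhood of $v$ lies in $X_i$.

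The main obstacle — really the only non-bookkeeping step — is the argument that $I_v = \{i\}$, where everything hinges on combining the connectedness of $I_v$ with the fact that smoothness forces $v$ to be exactly the vertex missing from the single neighbouring bag $X_j$. One should also dispose of the degenerate case $|I| = 1$ separately, in which $X_i = V$ by the covering axiom and the claim is immediate, provided a single-node tree is permitted to count as having a leaf.
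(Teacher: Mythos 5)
Your proposal is correct and follows essentially the same route as the paper's proof: identify the unique vertex $v \in X_i \setminus X_j$ guaranteed by smoothness, use the connectedness axiom to conclude that $X_i$ is the only bag containing $v$, and then apply the edge axiom to place all neighbours of $v$ in $X_i$. The paper handles the single-bag case the same way you do, so there is nothing to add.
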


\begin{proof}
	If $i$ is the only node of $T$, then there is nothing to show.
	Otherwise, let $j$ be the unique neighbour of $i$ in $T$. By assumption $|X_i \cap X_j| = k$.
	Thus, there is exactly one node $v$ in $X_i$ which does not lie in $X_j$.
	This node must not appear in any other node of $T$ since every path between $i$ and another node of $T$ traverses $j$.
	Thus, all neighbours of $v$ are contained in $X_i$.
\end{proof}

\begin{lemma}
	\label{lemma: treewidth subdivision}
	Let $G'$ be a subdivision of a graph $G$. Then
	\[\tw(G) \leq 2 \Leftrightarrow \tw(G') \leq 2.\]
\end{lemma}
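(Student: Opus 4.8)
The plan is to reduce the statement to the case of a single \emph{simple} subdivision and then argue each implication by directly manipulating a tree decomposition. Since by definition a subdivision $G'$ arises from $G$ through a finite sequence $(G_0 = G, \dots, G_m = G')$ of simple subdivisions, and the claimed biconditional is transitive along such a chain, it suffices to prove: if $G'$ is obtained from $G$ by replacing a single edge $e = uw$ with a path $u v w$ (where $v$ is a new vertex of degree two), then $\tw(G) \le 2 \Leftrightarrow \tw(G') \le 2$. I fix this notation for both directions.

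For the forward implication, I would start from a tree decomposition $(\{X_i : i \in I\}, T)$ of $G$ of width at most two. Because $uw \in E(G)$, the edge-covering property gives a bag $X_i$ with $u, w \in X_i$. I attach a new leaf node $i'$ to $i$ in $T$ and set $X_{i'} = \{u, v, w\}$. This bag has at most three elements, so the width stays at most two. It covers the new vertex $v$ and both new edges $uv$ and $vw$, while every edge of $G$ other than $uw$ is still an edge of $G'$ and remains covered by the old bags; the edge $uw$ no longer needs to be covered. The connectivity property is immediate: $v$ occurs only in $X_{i'}$, and for $u$ and $w$ the old subtree of bags containing them is enlarged only by the adjacent leaf $i'$, so it stays connected. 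Hence $\tw(G') \le 2$.

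For the backward implication I would take a tree decomposition $(\{X_i : i \in I\}, T)$ of $G'$ of width at most two and \emph{suppress} $v$ by the relabelling $Y_i := (X_i \setminus \{v\}) \cup \{u\}$ whenever $v \in X_i$, and $Y_i := X_i$ otherwise. Replacing $v$ by $u$ never enlarges a bag, so the width does not increase. The edge $uw$ of $G$ is covered because some bag of $G'$ contained both $v$ and $w$ (as $vw \in E(G')$), and in that bag $v$ has been turned into $u$; all remaining edges of $G$ are edges of $G'$ avoiding $v$ and stay covered.

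The subtle point — and what I expect to be the main obstacle — is verifying the connectivity (third) property of the modified decomposition, since merging the occurrences of $v$ into those of $u$ could in principle disconnect the set of bags containing $u$. For every vertex $x \notin \{u, v\}$ the set $\{i : x \in Y_i\}$ equals $\{i : x \in X_i\}$ and is therefore a subtree. For $x = u$ we get $\{i : u \in Y_i\} = \{i : u \in X_i\} \cup \{i : v \in X_i\}$, a union of two subtrees of $T$; this union is connected precisely because $uv \in E(G')$ forces a bag containing both $u$ and $v$, so the two subtrees intersect. Thus $(\{Y_i : i \in I\}, T)$ is a valid tree decomposition of $G$ of width at most two, giving $\tw(G) \le 2$. (Alternatively, the backward direction follows at once from the minor-monotonicity of treewidth, since $G$ is obtained from $G'$ by contracting the edge $vw$; I would keep the explicit argument in order to stay self-contained.)
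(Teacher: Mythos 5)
Your proof is correct, and one half of it is exactly the paper's argument: for the forward implication the paper likewise takes a bag $X_i$ containing the two endpoints of the subdivided edge and attaches a new leaf bag $\{x,y,z\}$ of size three, then closes the gap from one simple subdivision to a general subdivision by induction, just as you do via transitivity along the chain. Where you diverge is the backward implication. The paper dispatches it in one line by minor-monotonicity of treewidth ($G$ is a minor of $G'$, hence $\tw(G)\le\tw(G')$), which is the route you relegate to your closing parenthesis. Instead you give an explicit suppression argument: replace $v$ by $u$ in every bag, check coverage, and verify the connectivity condition by observing that the occurrences of $u$ become the union of two subtrees that intersect in a bag witnessing the edge $uv$. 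Your verification is sound (the union of two intersecting subtrees of a tree is a subtree), and it is the right subtle point to isolate. What each approach buys: the paper's citation of minor-monotonicity is shorter but imports a nontrivial external fact; your construction is self-contained, stays at the level of elementary bag manipulations matching the rest of the paper's toolkit, and, like the paper's argument, actually proves the stronger statement $\tw(G)\le\tw(G')$ for arbitrary treewidth, not just the threshold at two.
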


\begin{proof}
	The graph $G$ is a minor of $G'$. Thus, $\tw(G) \leq \tw(G')$.
	
	Let now $G$ be a graph with $\tw(G) \leq 2$.
	We show that subdividing one edge does not increase the treewidth to more than $2$.
	The statement above then follows inductively. Let $xz$ be the edge of $G$ which is replaced by the edges $xy$ and $yz$ in the subdivision.
	Let $(X,T)$ be a minimum tree decomposition of $G$ with $X = \{X_1, \dots, X_l\}$. Then, there must be a bag $X_i \in X$ containing $x$ and $z$. We add a new bag $X_{l+1} = \{x,y,z\}$ to $T$ and a new edge $X_{l+1}X_i$ to $T$. The result is a tree decomposition of $G'$ of width $2$.
\end{proof}

\begin{observation}
	\label{obs: parallel edges treewidth}
	Adding parallel edges does not change the treewidth. The same tree decomposition can be maintained.
\end{observation}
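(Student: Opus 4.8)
The plan is to unwind the definition of a tree decomposition and to observe that all three of its defining conditions are insensitive to the multiplicity of edges, so that a single tree decomposition can serve simultaneously for the original graph and for the one obtained by adding parallel edges.

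First I would fix a graph $G = (V,E)$ together with a tree decomposition $(\{X_i : i \in I\}, T)$ realizing its treewidth, and I would let $G' = (V, E')$ be obtained by adding one edge $e'$ parallel to an already existing edge $e = uv$. Since it suffices to treat a single added parallel edge and then iterate over the finitely many additions, this reduction is the only preliminary step.

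Next I would verify that the very same pair $(\{X_i : i \in I\}, T)$ is a tree decomposition of $G'$. The vertex set is unchanged, so the covering condition $\bigcup_{i \in I} X_i = V$ and the path–intersection condition hold verbatim. The only condition that could conceivably fail is the edge condition, and the only edge of $G'$ not present in $G$ is $e'$; but $e'$ has the same endpoints $u,v$ as the existing edge $e$, and $e$ already forced some bag $X_i$ to contain both $u$ and $v$. That same bag witnesses the edge condition for $e'$. Hence the tree decomposition is valid for $G'$ and has the same width, giving $\tw(G') \le \tw(G)$.

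For the reverse inequality I would note that $G$ arises from $G'$ by deleting the edge $e'$, and deleting an edge can only remove one instance of the edge condition while leaving the other two conditions intact; thus any tree decomposition of $G'$ is at once a tree decomposition of $G$, so $\tw(G) \le \tw(G')$. Combining the two inequalities yields $\tw(G) = \tw(G')$, as claimed. I do not expect a genuine obstacle here: the statement is essentially a definition chase, and the only point requiring care is the remark that the new edge shares its endpoints with an old one, so that no new bag — and in particular no increase in any bag's size — is ever needed, which is exactly why the \emph{same} tree decomposition can be maintained.
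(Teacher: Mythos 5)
Your proof is correct and is exactly the argument the paper intends: the observation itself already asserts that ``the same tree decomposition can be maintained,'' and your verification that the edge condition for the new parallel edge is witnessed by the bag already containing both endpoints of the original edge (together with the trivial reverse inequality) is precisely the definition chase the authors leave implicit. No gap, no difference in approach.
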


We are now able to show that all double ear decomposable graphs are of treewidth less than or equal to two and that they have all node degrees in $\{2,4\}$.

\begin{proof}[Proof of Theorem \ref{theorem: EquivalentDoubleEarDecomposition}, Part I]
	Let $G$ be a double ear decomposable graph with a decomposition $(G_0, G_1, \dots, G_k)$.
	It follows immediately by induction on $k$ that $\tw(G) \leq 2$ (cf.\ Figure \ref{fig: doubleEarDecomposableGraphsAreOfTWLeq2}) and for all $v \in V(G)\colon \deg_G(v) 
	\in \{2,4\}$.
	\begin{figure}[htbp]
		\caption{Adding a double-ear does not change the treewidth.}
		\label{fig: doubleEarDecomposableGraphsAreOfTWLeq2}
		\centering
		\begin{subfigure}[]{0.3\textwidth}
			\centering
			\begin{tikzpicture}[yscale = 0.5, xscale = 0.8]
			\node (a) at (1,1) {$a$};
			\node (b) at (2,1.6) {$b$};
			\node (c) at (2.7,2.4) {$c$};
			\node (d) at (2.7,3.5) {$d$};
			\node (e) at (2,4.3) {$e$};
			\node (f) at (1,4.9) {$f$};
			\draw[-] (a) edge (b);
			\draw[-] (b) edge (c);
			\draw[-] (c) edge (d);
			\draw[-] (d) edge (e);
			\draw[-] (e) edge (f);
			\draw[dashed] (f) -- (0.2, 4.9);
			\draw[dashed] (a) -- (0.2, 1);
			\end{tikzpicture}
			\caption*{(I) The treewidth of this graph is less than or equal to two (by induction).}
		\end{subfigure}\qquad
		\begin{subfigure}[h]{0.3\textwidth}
			\centering
			\begin{tikzpicture}[yscale = 0.5, xscale = 0.8]
				\node (a) at (1,1) {$a$};
				\node (b) at (2,1.6) {$b$};
				\node (e) at (2,4.3) {$e$};
				\node (f) at (1,4.9) {$f$};
				\draw[-] (b) edge (e);
				\draw[-] (a) edge  (b);
				\draw (e) -- (f);
				\draw[dashed] (f) -- (0.2, 4.9);
				\draw[dashed] (a) -- (0.2, 1);
			\end{tikzpicture}
			\caption*{(II) By Lemma \ref{lemma: treewidth subdivision} the treewidth of this graph is less than or equal to two.}
		\end{subfigure}\qquad
		\begin{subfigure}[h]{0.28\textwidth}
			\centering
			\begin{tikzpicture}[yscale = 0.5, xscale = 0.8]
			\node (a) at (1,1) {$a$};
			\node (b) at (2,1.6) {$b$};
			\node (e) at (2,4.3) {$e$};
			\node (f) at (1,4.9) {$f$};
			\draw[-] (b) edge [bend right = 10] (e);
			\draw[-] (b) edge [orange, bend left = 10] (e);
			\draw[-] (a) edge  (b);
			\draw (e) -- (f);
			\draw[dashed] (f) -- (0.2, 4.9);
			\draw[dashed] (a) -- (0.2, 1);
			\end{tikzpicture}
			\caption*{(III) Adding a \textcolor{orange}{parallel edge} does not change the treewidth.}
		\end{subfigure}
		\begin{subfigure}[h]{0.45\textwidth}
			\centering
			\begin{tikzpicture}[yscale = 0.5, xscale = 0.8]
			\node (a) at (1,1) {$a$};
			\node (b) at (2,1.6) {$b$};
			\node (c) at (2.7,2.4) {$c$};
			\node (d) at (2.7,3.5) {$d$};
			\node (e) at (2,4.3) {$e$};
			\node (f) at (1,4.9) {$f$};
			\draw[-] (a) edge (b);
			\draw[-] (b) edge (c);
			\draw[-] (c) edge (d);
			\draw[-] (d) edge (e);
			\draw[-] (e) edge (f);
			\draw[-] (b) edge [orange] (e);
			\draw[dashed] (f) -- (0.2, 4.9);
			\draw[dashed] (a) -- (0.2, 1);
			\end{tikzpicture}
			\caption*{(IV) Subdividing the edge $be$ does not change the treewidth by Lemma \ref{lemma: treewidth subdivision}}	
		\end{subfigure}\qquad
		\begin{subfigure}[h]{0.45\textwidth}
			\centering
			\begin{tikzpicture}[yscale = 0.5, xscale = 0.8]
			\node (a) at (1,1) {$a$};
			\node (b) at (2,1.6) {$b$};
			\node (c) at (2.7,2.4) {$c$};
			\node (d) at (2.7,3.5) {$d$};
			\node (e) at (2,4.3) {$e$};
			\node (f) at (1,4.9) {$f$};
			\draw[-] (a) edge (b);
			\draw[-] (b) edge [bend right = 15] (c);
			\draw[-] (c) edge [bend right = 8] (d);
			\draw[-] (d) edge [bend right = 15] (e);
			\draw[-] (e) edge (f);
			\draw[-] (b) edge [orange] (e);
			\draw[-] (b) edge [red, bend left = 15] (c);
			\draw[-] (c) edge [red, bend left = 8] (d);
			\draw[-] (d) edge [red, bend left = 15] (e);
			\draw[dashed] (f) -- (0.2, 4.9);
			\draw[dashed] (a) -- (0.2, 1);
			\end{tikzpicture}
			\caption*{(V) Again, adding \textcolor{red}{parallel edges} does not change the treewidth.}	
		\end{subfigure}\qquad
	\end{figure}
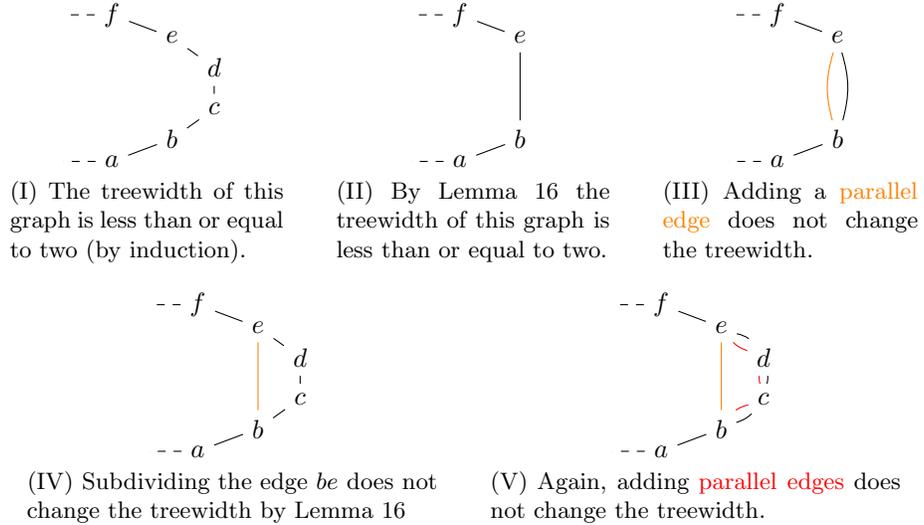
\end{proof}

\begin{lemma}
	\label{lemma: timeI}
	Let $(G_0, G_1, \dots, G_{k-1}, G)$ be a double ear decomposition of $G$ starting with a cycle.
	Assume that there are exactly two parallel edges $e_1$ and $e_2$ connecting the nodes $v$ and $w$ in $G$.
	Then, exactly one of the following statements holds true:
	\begin{enumerate}[a)]
		\item $G_0 = G$ consists of the nodes $v, w$ and the two parallel edges $e_1$ and $e_2$.
		\item There is an index $0 \leq i \leq k-1$ such that in $G_i$ there is a loop wich is subdivided into the two edges $e_1$ and $e_2$ in $G_{i+1}$.
		\item One of the parallel edges arises from a double ear step: There is an $0 \leq i \leq k-1$ such that one of the two edges (W.l.o.g.\ $e_1$) is part of $G_i$ whereas $e_2$ appears firstly in $G_{i+1}$. Furthermore, $G_{i+1}$ is $G_i$ plus a double ear added to some path $P$ and
		\begin{itemize}
			\item either $G_i$ is a cycle, $e_1 \notin P$ and $v,w$ are the two ends of $P$
			\item or $e_1 \in P$ and at least one of the nodes $v,w$ lies in the interior of $P$.
		\end{itemize}	
	\end{enumerate}
\end{lemma}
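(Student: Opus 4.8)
The plan is to track the genealogy of the two edges $e_1,e_2$ through the decomposition and to identify the single step that first realises them as a parallel pair between $v$ and $w$. First I would record the invariants that every graph $G_j$ in the sequence satisfies. A subdivision step replaces one edge by two and creates exactly one new vertex, while a double ear step (Definition~\ref{def: Adding a Double Ear}) creates no new vertex, duplicates each edge of the chosen path, and adds a single connecting edge (or one loop in the degenerate case). Consequently, once a vertex appears it persists with its identity, and once an edge with endpoints $v,w$ appears it keeps those endpoints unless it is subdivided; since $e_1,e_2$ survive into $G$, neither is subdivided after its birth. Finally, because $G_0$ is a cycle and both operations preserve connectivity, every $G_j$ is connected. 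I would assign to each of $e_1,e_2$ a birth step and, relabelling if necessary, assume $e_2$ is born no earlier than $e_1$; the whole argument then turns on the mechanism that produces $e_2$.

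Next I would split according to that mechanism. If both edges already lie in $G_0$, then $G_0$ is a cycle carrying a parallel pair and is therefore the $2$-cycle on $\{v,w\}$; one then checks that the decomposition admits no further step, since a subdivision would destroy $e_1$ or $e_2$ and any double ear along a $vw$-path would create a third $vw$-edge, contradicting the hypothesis. (The one point to watch here is that a stray double ear attaching a loop elsewhere must also be excluded; this is exactly where the exactly-two hypothesis, or working with a loopless $G$, is used.) This yields case~a). If instead $e_2$ is created by a subdivision $G_i\to G_{i+1}$, I would note that a subdivision yields two edges meeting the newly created vertex, so for $e_2$ to be parallel to an existing $vw$-edge the subdivided edge must have had coincident endpoints, i.e.\ it is a loop, which is split precisely into $e_1$ and $e_2$. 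This is case~b).

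The remaining and most delicate mechanism is when $e_2$ is created by a double ear step along a path $P$, giving case~c); I expect this to be the main obstacle, because it is where the ``exactly two parallel edges'' hypothesis must be used to discard degenerate configurations. Here $e_2$ is either the connecting edge or a duplicate of a path edge. If $e_2$ is the connecting edge, then $v,w$ are the endpoints of $P$ and the second $vw$-edge is $e_1\notin P$; since the two endpoints of $P$ and every interior vertex of $P$ have degree two in $G_i$, connectivity of $G_i$ forces $G_i$ to be exactly the cycle $P+e_1$, which is the first sub-case. If $e_2$ is the duplicate of a path edge $g$, then $g$ is a $vw$-edge already present, so $g=e_1\in P$; the possibility that $P$ is the single edge $vw$ is ruled out because it would create, together with the connecting edge, a third $vw$-edge, contradicting the hypothesis, so $P$ has length at least two and hence at least one of $v,w$ is an interior vertex of $P$, which is the second sub-case.

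Finally I would establish mutual exclusivity: the event that first realises $e_1,e_2$ as a parallel pair is the birth of the later edge, which happens at a unique step and is of exactly one type (already present in $G_0$, a loop subdivision, or a double ear step), so exactly one of a), b), c) holds. Throughout, the only genuinely non-routine points are ruling out the length-one path $P$ and invoking connectivity to recognise $G_i$ as a cycle in case~c), together with the base-case exclusion of stray loop-additions in case~a); everything else is bookkeeping about which step gives birth to which edge.
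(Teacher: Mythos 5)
Your proof follows essentially the same route as the paper's: a case analysis on the step at which the later-born of $e_1,e_2$ first appears (already in $G_0$, created by a simple subdivision --- forcing a subdivided loop --- or created by a double ear step), with the same sub-split in the double-ear case according to whether $e_1$ lies on the chosen path $P$. You supply somewhat more detail than the paper, notably the explicit mutual-exclusivity argument and the concern in case~a) about a later step (e.g.\ a stray loop-addition) preventing $G_0=G$, a point the paper's proof silently glosses over; otherwise the two arguments coincide.
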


\begin{proof}
	We may assume that the index where $e_1$ appears first in a graph $G_i, i = 0, \dots, k$ is less than or equal to the index $i^*$ where $e_2$ appears first. This enhances us to distinguish the following three cases. Let 
	\begin{case}[$i^* = 0$] This means that $e_1$ and $e_2$ are already contained in $G_0$. Since $G_0$ is a cycle, we get that the two parallel edges $e_1$ and $e_2$ form the whole cycle $G_0$.
	\end{case}
	\begin{case}[$i^* > 0$ and $G_{i^* -1} \rightarrow G_{i^*}$ is a simple subdivision]
	The only possibility for a parallel edge to arise from a simple subdivision is the subdivision of a loop.
	\end{case}
	\begin{case}[$i^* > 0$ and $G_{i^* -1} \rightarrow G_{i^*}$ is a double ear step]
	 Let $P$ be the degree two path in $G_{i^* -1}$	to which the double ear is added. If $P$ contains $e_1$, then at least one end of $e_1$ is in the interior of $P$. Otherwise, the double ear step would result in a triple edge which contradicts the assumption. If $P$ does not contain $e_1$, its both ends must be $v$ and $w$. Consequently $G_{i^* -1}$ is a cycle since all nodes in $P$ are of degree two.
	\end{case}
	\setcounter{case}{0}
\end{proof}

\begin{lemma}
	\label{lemma: timeII}
	Let $(G_0, G_1, \dots, G_{k-1}, G)$ be a double ear decomposition starting with a cycle.
	Assume that there are exactly three parallel edges $e_1, e_2$ and $e_3$ connecting two nodes $v$ and $w$ in $G$.
	Then, there is an index $0 \leq i \leq k-1$ such that $\deg_{G_i}(u) = \deg_{G_i}(v) = 2$, there is an edge $e$ in $G_i$ connecting $v$ with $w$
	and $G_{i+1}$ is constructed by adding a double ear to the path $vw$ in $G_i$.
\end{lemma}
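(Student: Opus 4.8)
The plan is to locate the double-ear step at which $v$ first attains degree four and to show that the path used there must be the single edge $vw$. First I would record two degree facts. Since the three parallel edges already force $\deg_G(v),\deg_G(w)\ge 3$ and all node degrees lie in $\{2,4\}$, we have $\deg_G(v)=\deg_G(w)=4$. Moreover, tracking how degrees evolve along the decomposition, every node has degree $2$ from the moment it is created (in the starting cycle or by a subdivision) until possibly the double-ear step whose path $P$ contains it, after which its degree is $4$ forever: a subdivision leaves existing degrees unchanged and only introduces fresh degree-two nodes, while a double ear raises every node of $P$ from $2$ to $4$ and touches no other node. Hence there is a well-defined step $s$ with $1\le s\le k$ at which $v$ lies on the double-ear path $P$ and passes from degree $2$ to degree $4$; I will set $i:=s-1$.

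Next I would argue that $G_{s}$ already contains at least three edges joining $v$ and $w$. The point is that the number of $v$--$w$ edges cannot increase after step $s$: no later double-ear step can place the degree-four vertex $v$ on its path, and a subdivision either splits an edge or redirects an edge at $v$ to a freshly created vertex distinct from $w$, so it never produces a new $v$--$w$ edge. As $G$ has exactly three such edges, $G_{s}$ has at least three, and combined with $\deg_{G_{s}}(v)=4$ this means that at least three of the four edges incident to $v$ run to $w$ in $G_{s}$.

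The heart of the proof, and the step I expect to be most delicate, is the local analysis of the double-ear step at $s$ via Definition \ref{def: Adding a Double Ear}. Writing $P=u_1\cdots u_r$, this step gives $v$ exactly two new edges, each pointing to a path-neighbour: two duplicated edges if $v$ is internal, or one duplicated edge together with the connecting edge $u_1u_r$ if $v=u_1$ is an endpoint. If $v$ were internal, its four edges would split two-and-two between its two \emph{distinct} path-neighbours, so at most two could reach $w$, contradicting the previous paragraph. Hence $v$ is an endpoint, say $v=u_1$, and its four edges are the path edge $u_1u_2$, its duplicate (both pointing to $u_2$), the connecting edge $u_1u_r$, and the off-path edge $e_0$. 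Since the path edge and its duplicate hit the same vertex, the requirement that at least three edges reach $w$ forces $w=u_2$ and at least one of $\{u_1u_r,\,e_0\}$ to hit $w$ as well. If the connecting edge hits $w$, then $u_r=u_2=w$, whence $r=2$ and $P=vw$, as desired. If instead only $e_0$ hits $w$, then $r\ge 3$, so $w=u_2$ is an interior node of $P$ with path-neighbours $u_1=v$ and $u_3\neq v$, and $e_0$ is a second $v$--$w$ edge at $w$; this gives $w$ three incidences in $G_{s-1}$, contradicting $\deg_{G_{s-1}}(w)=2$ (which holds because $w\in P$). In either case $P=vw$.

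Finally I would assemble the conclusion: at $i=s-1$ both $v$ and $w$ are the two nodes of $P=vw$, so $\deg_{G_i}(v)=\deg_{G_i}(w)=2$, the path edge $e$ joins $v$ and $w$ in $G_i$, and $G_{i+1}=G_{s}$ is obtained from $G_i$ by adding a double ear to the path $vw$, exactly as claimed. The main obstacle is the bookkeeping in the third paragraph: one must control the positions of $v$ and $w$ on $P$ simultaneously and, in particular, the argument must absorb the degenerate configuration in which all four edges at $v$ already run to $w$ in $G_{s}$ (so that $G_{s}$ is the four-fold edge on $\{v,w\}$) — there the connecting edge again hits $w$ and the same reasoning still yields $P=vw$, so no separate case is needed once the count is set up correctly.
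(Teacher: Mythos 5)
Your proof is correct, and it reaches the paper's conclusion by a slightly different route with considerably more detail. The paper pins down the critical index as the first graph in which all three of $e_1,e_2,e_3$ are present, notes that neither $G_0$ nor a simple subdivision can produce a triple edge, and then simply asserts ``consequently'' that exactly one of the three edges is old and that the ear path is $vw$; the entire local analysis is left implicit. You instead index by the step $s$ at which $v$ passes from degree $2$ to degree $4$, show that the count of $v$--$w$ edges is non-increasing after $s$ (so $G_s$ already carries at least three of them), and then carry out the case analysis on the position of $v$ in the ear path $P$ that actually forces $P=vw$ --- ruling out $v$ interior by the two-distinct-neighbours count and ruling out $w$ interior by the degree-$2$ constraint on path nodes. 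This supplies exactly the argument the paper omits, and it also absorbs the degenerate configuration in which $G_{i+1}$ is a quadruple edge and one of the four parallel edges is subdivided later; in that situation the paper's claim that \emph{exactly one} of $e_1,e_2,e_3$ lies in $G_{i^*-1}$ can fail (two of the three surviving edges may be old), even though the lemma's conclusion still holds, so your version is in fact the more robust one. Two minor points: you should say a word about the length-zero path ($r=1$, a loop at $v$), which contributes no $v$--$w$ edge and is excluded by the same count, and note that the $\deg_{G_i}(u)$ in the statement is a typo for $\deg_{G_i}(w)$, which you have implicitly read correctly.
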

\begin{proof}
	A triple edge can not be generated by a simple subdivision and neither can $G_0$ be a triple edge.
	Thus, there must be an index $i^*$ such that $G_{i^*-1} \rightarrow G_{i^*}$ is a double ear step and $G_{i^*}$ is the first graph in the decomposition that contains all three parallel edges $e_1, e_2, e_3$.
	Consequently, exactly one of the three edges, say $e_1$, already is contained in the edge set of $G_{i^* - 1}$ and $G_{i^*}$ is constructed by adding a double ear to the path $vw$.
\end{proof}

\begin{lemma}
	\label{lemma: timeIII}
	Let $(G_0, G_1, \dots, G_{k-1}, G)$ be an ear decomposition starting with a cycle.
	Let $e_1, e_2, e_3$ and $e_4$ be four parallel edges in $G$ connecting two different nodes $v, w \in V(G)$.
	Then, $G$ is a quadruple edge.
\end{lemma}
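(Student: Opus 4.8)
The plan is to bypass the construction-tracing style used in the proofs of Lemmas \ref{lemma: timeI} and \ref{lemma: timeII} and instead exploit the degree restriction already established in Part I of Theorem \ref{theorem: EquivalentDoubleEarDecomposition}: since $G$ is double ear decomposable, every node of $G$ has degree $2$ or $4$, so in particular $\Delta(G) \leq 4$. Together with connectivity, this single fact should force the conclusion almost immediately, so the argument will be considerably shorter than those of the previous two lemmas.

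First I would record the degrees at $v$ and $w$. Each of the four parallel edges $e_1, e_2, e_3, e_4$ is incident to both $v$ and $w$, hence each contributes one to $\deg_G(v)$ and one to $\deg_G(w)$. Therefore $\deg_G(v) \geq 4$ and $\deg_G(w) \geq 4$. As the maximum degree of a double ear decomposable graph is $4$, we conclude $\deg_G(v) = \deg_G(w) = 4$, and moreover these degrees are entirely accounted for by $e_1, \dots, e_4$. In particular, no further edge is incident to $v$ or to $w$, and there is no loop at either node.

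Next I would invoke connectivity. A graph that is double ear decomposable starting with a cycle is connected, since $G_0$ is a cycle and neither a simple subdivision nor the addition of a double ear can disconnect a connected graph. Because $v$ and $w$ carry no incident edges other than $e_1, \dots, e_4$, the vertex set $\{v, w\}$ together with these four edges already forms a connected component of $G$. By connectivity this component must be all of $G$, so $G$ is exactly the quadruple edge on $v$ and $w$, as claimed.

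The argument has no real obstacle; the only points requiring care are, first, confirming that Part I of Theorem \ref{theorem: EquivalentDoubleEarDecomposition} is genuinely available at this stage, so that the bound $\Delta(G) \leq 4$ may be used freely, and second, explicitly ruling out both loops and any incidence of $v$ or $w$ with a fifth edge. Both follow from the observation that the degree count $\deg_G(v) = \deg_G(w) = 4$ is already saturated by the four given parallel edges.
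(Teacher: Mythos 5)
Your proof is correct and follows essentially the same route as the paper: the paper's own (very terse) proof also just invokes Part~I of Theorem~\ref{theorem: EquivalentDoubleEarDecomposition} to bound all degrees by $4$ and concludes immediately that $G$ is a quadruple edge. Your write-up is in fact more complete, since you make explicit the two steps the paper leaves implicit --- that the four parallel edges saturate $\deg_G(v)$ and $\deg_G(w)$, and that connectivity (preserved by subdivisions and double ear additions from the initial cycle) forces $G$ to equal the component $\{v,w\}$ with these four edges.
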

\begin{proof}
	From theorem \ref{theorem: EquivalentDoubleEarDecomposition}, we know that the maximum degree of a node in $G$ is four. Thus, $G$ is a quadruple edge.
\end{proof}

\begin{proof}[Proof of Theorem \ref{theorem: EquivalentDoubleEarDecomposition}, Part II]

	Let $G$ be a graph with $\deg(v) \in \{2,4\}$ for all $v \in V(G)$ and $\tw(G) \in \{1,2\}$.
	We prove that $G$ is a double ear decomposable graph by induction on the number $n$ of vertices in $G$.
	
	In the case of $n = 1$, $G$ is either a single or a double loop. Both are double ear decomposable.
	If $G$ is a single loop, $\{G\}$ is a double ear decomposition since $G$ is a cycle itself.
	Otherwise $\{G_0, G\}$ is a double ear decomposition, where $G_0$ is a single loop. The double ear step is done by considering the length-zero-path consisting of the only node in $G_0$. 
	
	Let now $n \geq 2$. By Lemma \ref{lemma: SmoothTreeDecomposition} there is a smooth tree decomposition $\left(\left\{ X_i, i \in I \right\}, T = (I, F)\right)$ of $G$ of width $2$.
	From Lemma \ref{lemma: LeavesInSmoothTreeDecompositions}, we obtain a node $u \in V$ that appears in exactly one bag $X_i$ and all of the neighbours of $u$ are contained in $X_i$.
	Thus, $u$ has either exactly one neighbour $v$ or it has two neighbours $v$ and $w$.
	
	The main concept behind this induction is to consider a graph $G'$ (mostly) with node set $V - u$ and $\tw(G') \leq 2$ and node degrees $2$ and $4$. By induction, $G'$ has a double ear decomposition. The idea then is to manipulate this decomposition of $G'$ such that we obtain a decomposition of $G$.
	\begin{case}[$u$ has one neighbour $v$ and $\deg(u) = 2$]
		\label{case: degu2 1 neighbour}
		If $\deg(v) = 2$ then $G$ is a cycle and thus $(G)$ is a double ear decomposition of $G$. Otherwise, we consider $G' = G-u$.
		Then $\tw(G') \leq 2$ and all of the degrees in $G'$ are still two or four. Thus, we can apply the induction hypothesis to obtain that $G'$ is double ear decomposable.
		Observe that $\deg_{G'}(v) = 2$.
		We choose the path $v$ of length zero for a next double ear step and obtain a graph $G''$ which is $G'$ with a loop added to $v$. We subdivide this loop to obtain $G$.
	\end{case}
	\begin{case}[$u$ has one neighbour $v$, $\deg(u) = 4$ and there is no loop at $u$]
		\label{case: degu4 1 neighbour}
		The graph $G$ is a quadruple edge by Lemma \ref{lemma: timeIII}.
		We find the double ear decomposition of $G$ as follows:
		Start with a cycle of length two and then add a double ear by considering a path of length one.
	\end{case}
	\begin{case}[$u$ has two neighbours $v,w$ and $\deg_G(u) = 2$]
		\label{case: degu2 2 neighbours}
		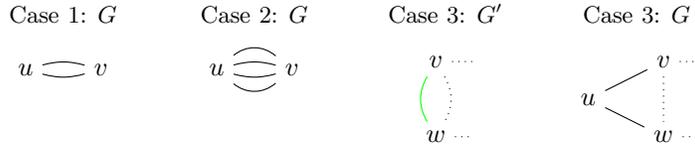
\begin{figure}[htbp]
			\centering
			\caption{First three cases.}
			\begin{subfigure}[t]{0.2\textwidth}
				\caption*{Case \ref{case: degu2 1 neighbour}: $G$}
				\centering
				\begin{tikzpicture}
				\clip(0.9,-0.3) rectangle (2.1,0.3);
				\node (u) at (1,0) {$u$};
				\node (v) at (2,0) {$v$};
				\draw[-] (u) edge [bend right = 15] (v);
				\draw[-] (u) edge [bend left = 15] (v);
				\end{tikzpicture}
			\end{subfigure}
			\begin{subfigure}[t]{0.2\textwidth}
				\caption*{Case \ref{case: degu4 1 neighbour}: $G$}
				\centering
				\begin{tikzpicture}
				\clip(0.9,-0.3) rectangle (2.1,0.3);
				\node (u) at (1,0) {$u$};
				\node (v) at (2,0) {$v$};
				\draw[-] (u) edge [bend right = 40] (v);
				\draw[-] (u) edge [bend right = 15] (v);
				\draw[-] (u) edge [bend left = 15] (v);
				\draw[-] (u) edge [bend left = 40] (v);
				\end{tikzpicture}
			\end{subfigure}
			\begin{subfigure}[t]{0.2\textwidth}
				\caption*{Case \ref{case: degu2 2 neighbours}: $G'$}
				\centering
				\begin{tikzpicture}
				\node (v) at (2,0.5) {$v$};
				\node (w) at (2,-0.5) {$w$};
				\draw[-] (v) edge [bend right, green] (w);
				\draw[-] (v) edge [bend left, dotted] (w);
				\draw[dotted] (v) -- (2.5,0.5);
				\draw[dotted] (w) -- (2.5,-0.5);
				\end{tikzpicture}
			\end{subfigure}
			\begin{subfigure}[t]{0.2\textwidth}
				\caption*{Case \ref{case: degu2 2 neighbours}: $G$}
				\centering
				\begin{tikzpicture}
				\node (u) at (1,0) {$u$};
				\node (v) at (2,0.5) {$v$};
				\node (w) at (2,-0.5) {$w$};
				\draw (u) -- (v);
				\draw (u) -- (w);
				\draw[dotted] (v) -- (w);
				\draw[dotted] (v) -- (2.5,0.5);
				\draw[dotted] (w) -- (2.5,-0.5);
				\end{tikzpicture}
			\end{subfigure}
		\end{figure}
		Construct a graph $G'$ by removing $u$ and adding an edge between $v$ and $w$.
		All degrees in $G'$ are preserved from $G$ and $\tw(G') \leq 2$. By induction we obtain that $G'$ has a double ear decomposition.
		We construct $G$ by subdividing an edge between $v$ and $w$ in $G'$. Thus, also $G$ has a double ear decomposition.
	\end{case}
	\begin{case}[$u$ shares one edge with $v$ and three edges with $w$]
		\label{case: udeg4 1v}
		Construct $G'$ by removing $u$ and adding one edge between $v$ and $w$. Observe that $\deg_{G'}(w) = 2$.
		By induction $G'$ is a double ear decomposable graph with decomposition $(G_0, G_1, \dots, G')$.
		We construct a graph $G''$ by subdividing the edge between $v$ and $w$ with $u$. Now, $G$ is obtained from $G''$ by adding a double ear to the path $vu$.
		Thus, $(G_0, G_1, \dots, G'', G)$ is a double ear decomposition of $G$.
		\begin{figure}[htbp]
			\centering
				\begin{subfigure}[h]{0.2\textwidth}
					\caption*{Case \ref{case: udeg4 1v}: $G'$}
					\centering
					\begin{tikzpicture}
					\node (v) at (2,0.5) {$v$};
					\node (w) at (2,-0.5) {$w$};
					\draw[-] (v) edge [green] (w);
					\draw[dotted] (v) -- (2.5,0.5);
					\draw[dotted] (w) -- (2.5,-0.5);
					\end{tikzpicture}
				\end{subfigure}	
				\begin{subfigure}[h]{0.2\textwidth}
					\caption*{Case \ref{case: udeg4 1v}: $G''$}
					\centering
					\begin{tikzpicture}
					\node (u) at (1,0) {$u$};
					\node (v) at (2,0.5) {$v$};
					\node (w) at (2,-0.5) {$w$};
					\draw (u) -- (v);
					\draw[-] (u) edge [cyan] (w);
					\draw[dotted] (v) -- (2.5,0.5);
					\draw[dotted] (w) -- (2.5,-0.5);
					\end{tikzpicture}
				\end{subfigure}
			\begin{subfigure}[h]{0.2\textwidth}
				\caption*{Case \ref{case: udeg4 1v}: $G$}
				\centering
				\begin{tikzpicture}
				\node (u) at (1,0) {$u$};
				\node (v) at (2,0.5) {$v$};
				\node (w) at (2,-0.5) {$w$};
				\draw (u) -- (v);
				\draw[-] (u) edge [cyan] (w);
				\draw[-] (u) edge [bend left, orange] (w);
				\draw[-] (u) edge [bend right, red] (w);
				\draw[dotted] (v) -- (2.5,0.5);
				\draw[dotted] (w) -- (2.5,-0.5);
				\end{tikzpicture}
			\end{subfigure}
		\end{figure}
	\end{case}
	\begin{case}[$u$ shares two edges with $v$ and two edges with $w$, the nodes $v$ and $w$ are adjacent]
		\label{case: udeg4 2v2w}
		Thus, $\deg_G(v) = \deg_G(w) = 4$.
		Assume that $u,v$ and $w$ are the only nodes of $G$.
		Then, each pair of nodes is connected by two exactly two edges in $G$.
		Set $G_0 = (\{u,v,w\}, \{uv,vw,wu\})$. Then $(G_0, G)$ is a double ear decomposition of $G$.
		
		Otherwise, there exists at least one other node in $G$. Let $x$ be the additional neighbour of $v$ and $y$ be the additional neighbour of $w$. Note that $x$ and $y$ might be the same node.
		Construct $G'$ by removing $u,v$ and $w$ from $G$ and adding one edge between $x$ and $y$.
		By induction $G'$ has a double ear decomposition $(G_0, G_1, \dots, G')$.
		Construct a graph $G''$ via subdivision of the edge between $x$ and $y$ by inserting $v,u,w$.
		Insert an ear at the path $wuv$. The resulting graph is $G$.
		Altogether $G$ has a double ear decomposition $(G_0, G_1, \dots, G', G'', G)$.
		\begin{figure}[htbp]
			\centering
			\begin{subfigure}[h]{0.2\textwidth}
				\caption*{Case \ref{case: udeg4 2v2w}: $G'$}
				\centering
				\begin{tikzpicture}
				\node (x) at (3,0.5) {$x$};
				\node (y) at (3,-0.5) {$y$};
				\draw[-] (x) edge [green] (y);
				\draw[dotted] (x) -- (3.5,0.5);
				\draw[dotted] (y) -- (3.5,-0.5);
				\end{tikzpicture}
			\end{subfigure}	
			\begin{subfigure}[h]{0.25\textwidth}
				\caption*{Case \ref{case: udeg4 2v2w}: $G''$}
				\centering
				\begin{tikzpicture}
				\node (u) at (1,0) {$u$};
				\node (v) at (2,0.5) {$v$};
				\node (w) at (2,-0.5) {$w$};
				\node (x) at (3,0.5) {$x$};
				\node (y) at (3,-0.5) {$y$};
				\draw[-] (u) edge [cyan] (v);
				\draw[-] (u) edge [cyan] (w);
				\draw (v) -- (x);
				\draw (w) -- (y);
				\draw[dotted] (x) -- (3.5,0.5);
				\draw[dotted] (y) -- (3.5,-0.5);
				\end{tikzpicture}
			\end{subfigure}
			\begin{subfigure}[h]{0.25\textwidth}
				\caption*{Case \ref{case: udeg4 2v2w}: $G$}
				\centering
				\begin{tikzpicture}
				\node (u) at (1,0) {$u$};
				\node (v) at (2,0.5) {$v$};
				\node (w) at (2,-0.5) {$w$};
				\node (x) at (3,0.5) {$x$};
				\node (y) at (3,-0.5) {$y$};
				\draw[-] (v) edge [orange] (w);
				\draw[-] (u) edge [bend left = 15, red] (v);
				\draw[-] (u) edge [bend right = 15, cyan] (v);
				\draw[-] (u) edge [bend left = 15, cyan] (w);
				\draw[-] (u) edge [bend right = 15, red] (w);
				\draw (v) -- (x);
				\draw (w) -- (y);
				\draw[dotted] (x) -- (3.5,0.5);
				\draw[dotted] (y) -- (3.5,-0.5);
				\end{tikzpicture}
			\end{subfigure}
		\end{figure}
	\end{case}
	\begin{case}[$u$ shares two edges with $v$ and two edges with $w$, the nodes $v$ and $w$ are not adjacent, $\deg_G(w) = \deg_G(v) = 4$]
		\label{case udeg4 2v2w v not adjacent to w}
		Construct $G'$ by removing $u$ and adding two edges, say $e_1$ and $e_2$, between $v$ and $w$.
		By induction, $G'$ has a double ear decomposition $(G_0, G_1, \dots, G_k = G')$.
		In $G'$, there are exaclty two edges connecting $v$ and $w$.
		Hence, we can apply Lemma \ref{lemma: timeI} and obtain three possible cases:
		\begin{enumerate}[a)]
			\item $G'$ is isomorphic to a double edge: This case can not occur since $v$ and $w$ are of degree $4$.
			\item There is an index $0 \leq i \leq k-1$ such that in $G_i$ there is a loop $e$ which is subdivided into the two edges $e_1$ and $e_2$ to construct $G_{i+1}$: W.l.o.g.\ let $v$ be the node that subdivides the loop.
			Construct $G_i'$ by subdividing $e$ with the node $u$.
			Now, consider the length-zero-path $u$ in $G_i'$ and add a double ear (i.e.\ a loop $e'$) to it. The resulting graph is called $G_i''$.
			Construct a third graph $G_{i+1}'$ by subdividing $e'$ with $v$.
			
			We obtain a double ear decomposition
			\[(G_1, \dots, G_i, G_i', G_i'', G_{i+1}', G_{i+2}', G_{i+1}', \dots, G_k' = G)\] of $G$,
			where for all $j \geq 1\colon$ $G_{j+1}'$ arises from $G_{j}'$ by applying the same construction step that is used to construct $G_{j+1}$ from $G_j$.
					\begin{figure}[htbp]
				\centering
				\begin{subfigure}[h]{0.2\textwidth}
					\caption*{Case \ref{case udeg4 2v2w v not adjacent to w}b): $G_{i+1}$}
					\centering
					\begin{tikzpicture}[every loop/.style={}]
					\node (v) at (2,0) {$w$};
					\node (u) at (1,0) {$v$};
					\draw[dotted] (v) -- (2.5,0.5);
					\draw[dotted] (v) -- (2.5,-0.5);
					\draw[-] (u) edge [bend left = 15] (v);
					\draw[-] (u) edge [bend right = 15] (v);
					\end{tikzpicture}
				\end{subfigure}	
				\begin{subfigure}[h]{0.2\textwidth}
					\caption*{Case \ref{case udeg4 2v2w v not adjacent to w}b): $G_i$}
					\centering
					\begin{tikzpicture}[every loop/.style={}]
					\node[] (v) at (2,0) {$w$};
					\draw[dotted] (v) -- (2.5,0.5);
					\draw[dotted] (v) -- (2.5,-0.5);
					\draw[-] (v) edge [loop left, green] (v);
					\end{tikzpicture}
				\end{subfigure}
				\begin{subfigure}[h]{0.2\textwidth}
					\caption*{Case \ref{case udeg4 2v2w v not adjacent to w}b): $G_i'$}
					\centering
					\begin{tikzpicture}[every loop/.style={}]
					\node (v) at (2,0) {$w$};
					\node[cyan] (u) at (1,0) {$u$};
					\draw[dotted] (v) -- (2.5,0.5);
					\draw[dotted] (v) -- (2.5,-0.5);
					\draw[-] (u) edge [bend left = 15] (v);
					\draw[-] (u) edge [bend right = 15] (v);
					\end{tikzpicture}
				\end{subfigure}
				\begin{subfigure}[h]{0.2\textwidth}
					\caption*{Case \ref{case udeg4 2v2w v not adjacent to w}b): $G_{i+1}'$}
					\centering
					\begin{tikzpicture}[every loop/.style={}]
					\node (u) at (1,0) {$u$};
					\node (v) at (2,0) {$w$};
					\node (w) at (0,0) {$v$};
					\draw[-] (u) edge [bend left = 15] (v);
					\draw[-] (u) edge [bend right = 15] (v);
					\draw[-] (u) edge [bend left = 15] (w);
					\draw[-] (u) edge [bend right = 15] (w);
					\draw[dotted] (v) -- (2.5,0.5);
					\draw[dotted] (v) -- (2.5,-0.5);
					\end{tikzpicture}
				\end{subfigure}
			\end{figure}
			
			\item
			One of the parallel edges arises from a double ear step: There is an $0 \leq i \leq k-1$ such that one of the two edges (W.l.o.g. $e_1$) is also an edge of $G_i$ whereas $e_2$ appears firstly in $G_{i+1}$. Furthermore, $G_{i+1}$ is $G_i$ plus a double ear added to some path $P$ and
			\begin{itemize}
				\item either $G_i$ is a cycle, say a $C_n$, $e_1 \notin P$ and $v,w$ are the two ends of $P$: In this case, we construct $G_i'$ by subdividing the edge $vw$ with $u$.
				Further, construct $G_{i+1}'$ by adding a double ear to the unique path between $u$ and $v$ that does not contain the edge between $u$ and $v$.
				Then, $(G_0, G_1, \dots, G_i, G_i', G_{i+1}', \dots, G_k' = G)$ is a double ear decomposition of $G$ where for all $j \geq 1\colon$ $G_{j+1}'$ arises from $G_{j}'$ by applying the same construction step that is used to construct $G_{j+1}$ from $G_j$.
				\item or $e_1 \in P$ and at least one of the nodes $v,w$ lies in the interior of $P$: We construct $G_i'$ by a simple subdivison of the edge $e_1$ in $G_i$. Let $P'$ be the path which is $P$ with the subdivided edge $e_1$. We construct $G_{i+1}'$ by adding a double ear to $P'$. From now on, we can use the same construction steps as in the double ear decomposition of $G'$ to obtain $G$.
			\end{itemize}
		\begin{figure}[htbp]
			\centering  
			\begin{subfigure}[h]{0.2\textwidth}
				\caption*{Case \ref{case udeg4 2v2w v not adjacent to w} c): $G_{i+1}$}
				\centering
				\begin{tikzpicture}
				\node (v) at (2,0.5) {$v$};
				\node (w) at (2,-0.5) {$w$};
				\draw[-] (v) edge [bend left = 15] (w);
				\draw[-] (v) edge [bend right = 15] (w);
				\draw[dotted] (v) -- (2.5,0.5);
				\draw[dotted] (w) -- (2.5,-0.5);
				\end{tikzpicture}
			\end{subfigure}	
			\begin{subfigure}[h]{0.2\textwidth}
				\caption*{Case \ref{case udeg4 2v2w v not adjacent to w} c): $G_i$}
				\centering
				\begin{tikzpicture}
				\node (v) at (2,0.5) {$v$};
				\node (w) at (2,-0.5) {$w$};
				\draw[-] (v) edge [green] (w);
				\draw[dotted] (v) -- (2.5,0.5);
				\draw[dotted] (w) -- (2.5,-0.5);
				\end{tikzpicture}
			\end{subfigure}
			\begin{subfigure}[h]{0.2\textwidth}
				\caption*{Case \ref{case udeg4 2v2w v not adjacent to w} c): $G_i'$}
				\centering
				\begin{tikzpicture}
				\node (u) at (1,0) {$u$};
				\node (v) at (2,0.5) {$v$};
				\node (w) at (2,-0.5) {$w$};
				\draw[-] (u) edge (v);
				\draw[-] (u) edge (w);
				\draw[dotted] (v) -- (2.5,0.5);
				\draw[dotted] (w) -- (2.5,-0.5);
				\end{tikzpicture}
			\end{subfigure}
			\begin{subfigure}[h]{0.2\textwidth}
				\caption*{Case \ref{case udeg4 2v2w v not adjacent to w} c): $G_{i+1}'$}
				\centering
				\begin{tikzpicture}
				\node (u) at (1,0) {$u$};
				\node (v) at (2,0.5) {$v$};
				\node (w) at (2,-0.5) {$w$};
				\draw[-] (u) edge [bend left = 15] (v);
				\draw[-] (u) edge [bend right = 15] (v);
				\draw[-] (u) edge [bend left = 15] (w);
				\draw[-] (u) edge [bend right = 15] (w);
				\draw[dotted] (v) -- (2.5,0.5);
				\draw[dotted] (w) -- (2.5,-0.5);
				\end{tikzpicture}
			\end{subfigure}
		\end{figure}
		\end{enumerate}

	\end{case}
	\begin{case}[There is a loop at $u$ and $\deg_G(u) = 4$]\hfill
		\label{case loop udeg4}
		\begin{enumerate}[a)]
			\item Assume that $u$ has two different neighbours $v$ and $w$.
			Construct $G'$ by removing $u$ and adding an edge between $v$ and $w$.
			By induction $G'$ is double ear decomposable.
			Next, we use a simple subdivison of and edge between $v$ and $w$ to obtain a graph $G''$.
			We call the subdividing node $u$. If we now add a double ear to the path $u$ in $G''$, we obtain $G$.
			\item If $u$ has only one neighbour $v$ in $G$, the construction is analogous:
			Construct $G'$ by removing $u$ and adding a loop to $v$. Then subdivide the loop by a node $u$ and add a double ear to the path $u$.\qedhere
		\end{enumerate}	
	\end{case}
	\begin{figure}[htbp]
		\centering
		\begin{subfigure}[h]{0.2\textwidth}
			\caption*{Case \ref{case loop udeg4}a): $G'$}
			\centering
			\begin{tikzpicture}
			\node (v) at (2,0.5) {$v$};
			\node (w) at (2,-0.5) {$w$};
			\draw[-, green] (v) edge (w);
			\draw[dotted] (v) -- (2.5,0.5);
			\draw[dotted] (w) -- (2.5,-0.5);
			\end{tikzpicture}
		\end{subfigure}	
		\begin{subfigure}[h]{0.2\textwidth}
			\caption*{Case \ref{case loop udeg4}a): $G''$}
			\centering
			\begin{tikzpicture}
			\node[cyan] (u) at (1,0) {$u$};
			\node (v) at (2,0.5) {$v$};
			\node (w) at (2,-0.5) {$w$};
			\draw[-] (v) edge (u);
			\draw[-] (u) edge (w);
			\draw[dotted] (v) -- (2.5,0.5);
			\draw[dotted] (w) -- (2.5,-0.5);
			\end{tikzpicture}
		\end{subfigure}
		\begin{subfigure}[h]{0.2\textwidth}
			\caption*{Case \ref{case loop udeg4}a): $G$}
			\centering
			\begin{tikzpicture}[every loop/.style={}]
			\node (u) at (1,0) {$u$};
			\node (v) at (2,0.5) {$v$};
			\node (w) at (2,-0.5) {$w$};
			\draw[-] (u) edge (v);
			\draw[-] (u) edge (w);
			\draw[dotted] (v) -- (2.5,0.5);
			\draw[dotted] (w) -- (2.5,-0.5);
			\draw[-] (u) edge [loop left] (u);
			\end{tikzpicture}
		\end{subfigure}
	\end{figure}
\begin{figure}[htbp]
	\centering
	\begin{subfigure}[h]{0.2\textwidth}
		\caption*{Case \ref{case loop udeg4}b): $G'$}
		\centering
		\begin{tikzpicture}[every loop/.style={}]
		\node[] (v) at (2,0) {$v$};
		\draw[dotted] (v) -- (2.5,0.5);
		\draw[dotted] (v) -- (2.5,-0.5);
		\draw[-] (v) edge [loop left, green] (v);
		\end{tikzpicture}
	\end{subfigure}
	\begin{subfigure}[h]{0.2\textwidth}
		\caption*{Case \ref{case loop udeg4}b): $G''$}
		\centering
		\begin{tikzpicture}[every loop/.style={}]
		\node (v) at (2,0) {$v$};
		\node[cyan] (u) at (1,0) {$u$};
		\draw[dotted] (v) -- (2.5,0.5);
		\draw[dotted] (v) -- (2.5,-0.5);
		\draw[-] (u) edge [bend left = 15] (v);
		\draw[-] (u) edge [bend right = 15] (v);
		\end{tikzpicture}
	\end{subfigure}
	\begin{subfigure}[h]{0.2\textwidth}
		\caption*{Case \ref{case loop udeg4}b): $G$}
		\centering
		\begin{tikzpicture}[every loop/.style={}]
		\node (u) at (1,0) {$u$};
		\node (v) at (2,0) {$v$};
		\draw[-] (u) edge [bend left = 15] (v);
		\draw[-] (u) edge [bend right = 15] (v);
		\draw[dotted] (v) -- (2.5,0.5);
		\draw[dotted] (v) -- (2.5,-0.5);
		\draw[-] (u) edge [loop left] (u);
		\end{tikzpicture}
	\end{subfigure}
\end{figure}
\end{proof}

\section{Calculating Double Ear Decompositions}
Let $G$ be a graph.
Theorem \ref{theorem: EquivalentDoubleEarDecomposition} allows to decide in linear time, whether $G$ is double ear decomposable or not:
We check if all of the node degrees are two or four.
Moreover, $G$ is of treewidth two if and only if all of its biconnected components are series parallel graphs.
Calculating the biconnected components of $G$ is also possible in linear time as well as checking if the components are series parallel.

We will present another algorithm that tests if a graph $G$ is double ear decomposable (i.e.\ $\tw(G) \leq 2$ and all degrees are in $\{2,4\}$).
If this is the case, the algorithm outputs the minimum cycle number in linear time.

%

\subsection{Reduction Steps}
The idea of the algorithm is to apply the reduction steps below.

\subsubsection{Reduction I: Loops}
	If $e$ is a loop in $G$, and $e$ is not the only edge in $G$ set $G:= G-e$ and $c:= c+1$.
	In the decomposition a loop is added to the path in $G-e$ consisting only of the endnode of $e$.
	If $e$ is a loop and it is the only edge of $G$, the decomposition starts with the loop $e$.
	Return $c+1$ and the calculated decomposition.

\subsubsection{Reduction II: Double Edges}
	Assume $G$ contains two nodes $v$ and $w$ with $\deg(v) = \deg(w) = 4$ which are connected by exactly two edges $e_1'$ and $e_2'$.
	Calculate an inclusion-wise maximal path $P =(v_0, e_1, v_1, \dots, e_l, v_l)$ which fulfils
	\begin{itemize}
		\item $e_1' \in \{e_1, \dots, e_l\}$,
		\item for $i = 1, \dots, k-1$: $v_i$ has exactly two different neighbours.
	\end{itemize}
	Remove $P$ from $G$ and remove an edge connecting $v_0$ and $v_l$ from $G$.
	Set $c := c+1$.
	
%
%

\subsubsection{Reduction III: Triple Edges}
	Assume $G$ contains two nodes $v$ and $w$ with $\deg(v) = \deg(w) = 4$ which are connected by exactly three edges $e_1, e_2, e_3$.
	Set $G:= G- e_2 - e_3$ and $c:= c + 1$.

\subsubsection{Reduction IV: Quadruple Edges}
	If there is a quadruple edge, then $G$ is a quadruple edge by Lemma \ref{lemma: timeIII}.
	In this case the cycle decomposition of $G$ consists of two cycles of length two.
	Set $c:= c + 2$.
	The cycle decomposition of the quadruple edge starts with a cycle of length two.
	Then, a double ear is added to a path of length one to obtain the quadruple edge.
	Return $c$ and the double ear decomposition.
	
\subsubsection{Reduction V: Resolving}
	If $u$ is a node of degree two in $G$ and $G$ contains more that one node, we choose $G'$ to be the graph where $u$ is resolved.
	The value $c$ remains unchanged by Observation \ref{obs: subdivision same cycles}.
	
\subsubsection{Failure}
	If none of the above reduction steps is applicable, return "This is not a double ear decomposable graph!"

\subsection{The Algorithm and its Analysis}
\label{sec:algor-its-analys}

The algorithm has a simple structure: As long as one of the above reduction steps is possible, it applies this step.  To implement the algorithm, we first compute the degrees of all nodes and build two lists $V_2$ and $V_4$ of the nodes of degree~$2$ and~$4$, respectively.  We also replace each bundle~$B$ of parallel edges by a single edge with weight equal to the cardinality of~$B$ and build lists $E_2$, $E_3$ and $E_4$ of the edges with weights $2$, $3$ and $4$, respectively.  We also build a list~$L$ of loops in the graph. Each vertex and each edge is linked to its (potential) entries in the lists via a pointer.  We also add a marker $\mathtt{visited}$ to each vertex~$v\in V$ which initially is set to false.  The initial structures can be built in time linear in the size of the graph.

In each iteration we first test whether a Reduction~I which removes loops can be done.  This test can be done easily by inspecting the list~$L$ of loops in the graph.  If no loops exist, we then test whether a Reduction~V (resolving) is possible. This can be done in constant time by checking the list~$V_2$.  If $V_2$ is nonempty, say $v\in V_2$, and $v$ has two neighbors $u$ and~$w$, we check whether $w$ is already in the adjacency list of~$u$.  If this is not the case, we add $w$ to the adjacency list of~$u$ and vice versa. Otherwise we simply increase the weight of the edge~$uv$ by~$1$.  Finally, we remove~$v$ from the graph.  If necessary, we update the edge lists using the pointers attached to the edges.  Clearly, all of these steps can be carried out in constant time.

If no reductions of Type~$I$ and~$V$ are possible, we check whether a Reduction~IV is applicable.  This is straightforward by inspecting~$E_4$.  Notice that by Lemma~20, this reduction is the final reduction on a graph of constant size, so this final iteration needs constant time.

Observe that, given the situation that there are no quadruple edges, a Reduction~III is applicable if and only if the list~$E_3$ is nonempty.  Thus, we can handle this reduction by checking~$E_3$ and decreasing the weights of the involved edges as well as updating the vertex and edge lists in constant time. 

The final case that needs to be handled is the situation that the only potential reduction is one of Type~II.  We can now pick an arbitrary vertex~$u$ from the list~$V_4$ and one of its neighbors~$v$ of degree~$4$.  We mark both vertices~$u$ and~$v$ as visited by setting their marker $\mathtt{visited}$ to true and initialize the path to contain the edge~$uv$. We now start a search at~$u$ by setting~$u$ to be the active vertex.  If the current active vertex~$x$ has exactly two neighbors inductively, exactly one of those neighbors will be unvisited.  Let~$y$ be this neighbor, which then is connected to~$x$ by a double edge.  We add the edge~$xy$ to the path, mark~$y$ to be visited and make~$y$ the active vertex.  We carry out the analogous search from~$v$.  Since all degrees of the nodes in the graph are~$2$ or~$4$, this yields an inclusionwise maximal path~$P$ with the properties needed for Reduction~II.  Removing the path~$P$ from the graph, updating the degrees and lists and resetting all the touched markers as unvisited needs time proportional to the length of~$P$.  Thus, the overall running time of the algorithm is linear.
	
	\subsubsection{Correctness}
	We know by Theorem \ref{theorem: ded cycle number} that if $G$ is $G'$ plus some double ear, then $c(G) = c(G') + 1$.
	Thus, we only need to prove the following statement. The correctness then follows by induction.
	\begin{lemma}
		If $G$ is a double ear decomposable graph, then one of the above reduction steps is always possible and the reduced graph $G'$ is also double ear decomposable.
	\end{lemma}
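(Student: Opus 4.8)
The plan is to lean on Theorem~\ref{theorem: EquivalentDoubleEarDecomposition}, so that ``double ear decomposable'' becomes the invariant ``connected, treewidth at most two, and all degrees in $\{2,4\}$''. To prove the lemma it then suffices to (i) exhibit an applicable reduction and (ii) verify that the reduced graph $G'$ still satisfies this invariant. The treewidth half of (ii) will be uniform and cheap: Reductions~I, II and~III only delete edges, so $G'$ is a subgraph, hence a minor, of $G$ and $\tw(G')\le\tw(G)\le 2$, where Observation~\ref{obs: parallel edges treewidth} lets me ignore multiplicities; Reduction~V resolves a degree-two node, so $G$ is a subdivision of $G'$ and Lemma~\ref{lemma: treewidth subdivision} gives $\tw(G')\le 2$. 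I would run the analysis in the algorithm's priority order I, V, IV, III, II, assuming at each stage that the earlier reductions are inapplicable, so that the hard case is reached only for a loopless $4$-regular $G$.

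The two easy cases dispose of loops and degree-two vertices directly. If $G$ carries a loop, Reduction~I fires; connectivity together with the hypothesis that the loop is not the only edge forces its endpoint to have degree four, so deletion drops it to two and $G'$ stays connected with degrees in $\{2,4\}$. If $G$ is loopless but has a degree-two vertex, Reduction~V fires, and since a loopless connected one-vertex graph would have degree $0\notin\{2,4\}$ there are at least two vertices; resolving the vertex leaves every remaining degree untouched, and treewidth is controlled by Lemma~\ref{lemma: treewidth subdivision}. Hence I may assume $G$ is loopless and $4$-regular.

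For the $4$-regular case I would fix a double ear decomposition of $G$ and look at its last double ear step, at index $i$. Because a subdivision permanently creates a degree-two vertex and no later step raises an existing degree, a $4$-regular $G$ admits no subdivision after step $i$; thus $G=G_i$ arises from the double-ear-decomposable graph $G_{i-1}$ by adding one double ear to a path $Q$ of length $m$. The value of $m$ selects the reduction and its removed edges are exactly those of the ear, so the reduced graph is $G_{i-1}$ and both existence and preservation follow at once: $m=0$ would place a loop in $G$ and is excluded; $m=1$ yields three parallel edges (when the ends of $Q$ carry edges outside $Q$) or four parallel edges (when $G_{i-1}\cong C_2$), disposed of by Reduction~III --- with Lemma~\ref{lemma: timeII} confirming a triple edge always arises this way --- or, via Lemma~\ref{lemma: timeIII}, by the terminal Reduction~IV in which $G$ \emph{is} a quadruple edge; and $m\ge 2$ produces precisely the pattern Reduction~II seeks, a chain of double edges along $Q$ closed by the connecting edge between its two distinct ends. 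Since the interior vertices of $Q$ have exactly two distinct neighbours and its ends have three, the inclusion-wise maximal double-edge chain extracted by Reduction~II coincides with $Q$, and the bookkeeping (each chain vertex falling from degree four to two) comes out right.

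The step I expect to be most delicate is Reduction~II: one must certify that the extracted maximal double-edge chain terminates at two vertices joined by a genuine connecting edge, and that deleting the chain together with that edge keeps the graph connected and leaves no vertex of degree zero or odd. For the lemma as stated this is clean, because I may take the chain to be the path $Q$ of the last ear, whose ends are joined exactly by the ear's connecting edge and whose removal returns $G_{i-1}$; the soft treewidth and degree invariants then finish the job. The genuine difficulty, which properly belongs to the algorithm's correctness rather than to this existence claim, is to show that an \emph{arbitrary} double edge picked by the search carries the same structure. Here the invariants are blind and one must fall back on Lemma~\ref{lemma: timeI}, discarding its case~(a) (which needs $G$ to be a double edge) and its case~(b) (a double edge born from a subdivided loop), so that only the double-ear case~(c) survives; arguing that case~(b) cannot leave a spuriously reducible double edge in a loopless $4$-regular graph is the crux.
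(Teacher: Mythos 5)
Your proposal is correct and follows essentially the same route as the paper: both arguments reduce to the last double ear step of a decomposition of $G$ (handling loops and degree-two vertices via Reductions I and V first), case on the length of the ear's path to select Reduction III, IV or II, and invoke Theorem~\ref{theorem: EquivalentDoubleEarDecomposition} together with preservation of treewidth and of the degree condition to conclude that $G'$ is again double ear decomposable. The subtlety you flag at the end --- that an \emph{arbitrary} double edge chosen by Reduction~II must also close up correctly --- is real but, as you note, belongs to the algorithm's correctness rather than to this lemma, and the paper glosses over it entirely.
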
 

	\begin{proof}
		Let $(G_0, G_1, \dots, G_k = G)$ be a double ear decomposition of $G$. If the step from $G_{k-1}$ to $G$ is a subdivision, there is a node of degree $2$ in $G$ and we can apply the resolving-reduction (unless $G$ itself is a loop -- then we apply the loop reduction and the algorithm returns $c = 1$ and the double ear decomposition $(G)$).
		Otherwise $G$  is built by adding a double ear to  a degree-$2$-path $P$ in $G_{k-1}$. If $P$ is of length $0$, we can apply the loop-reduction. If $P$ is of length $1$, we can apply the triple- or the quadruple-edge-reduction. Otherwise, we can apply the double-edge-reduction.
		
		It remains to show that $G'$ is also a double ear decomposable graph.
		But this actually follows immediately with Theorem \ref{theorem: EquivalentDoubleEarDecomposition}. The treewidth is not increased by the reduction steps and also the degree condition stays true.
	\end{proof}
	
\printbibliography
\end{document}